\documentclass[11pt,eqright]{amsart}
\usepackage{amssymb,amsmath}
\usepackage{bm}
\usepackage{amsfonts}
\usepackage{epsf}
\usepackage{graphicx}
\newtheorem{theorem}{Theorem}[section]
\newtheorem{lemma}[theorem]{Lemma}
\newtheorem{corollary}[theorem]{Corollary}

\newtheorem{remark}[theorem]{Remark}

\theoremstyle{definition}
\usepackage[titletoc]{appendix}
\usepackage{subfigure}
\numberwithin{equation}{section}
\numberwithin{table}{section}

\newcommand{\bc}{\begin{center}}
\newcommand{\ec}{\end{center}}
\newcommand{\be}{\begin{eqnarray}}
\newcommand{\ee}{\end{eqnarray}}
\newcommand{\nn}{\nonumber}
\newcommand{\ben}{\begin{eqnarray*}}
\newcommand{\een}{\end{eqnarray*}}

\newcommand{\Om}{\Omega}

\newcommand{\lam}{\lambda}

\newcommand{\na}{\nabla}

\def\x{\times}

\def\na{\nabla}

%---------------------------
%       Greek Letters
%---------------------------

\def\cE{\mathcal{E}}

\def\cT{\mathcal{T}}
\def\R{\mathbb{R}}
%\def\R{\mathcal{R}}
%\def\S{\mathcal{S}}

%\def\betheta{\boldsymbol{\eta}}

%-----------------------------------------
\def\div{\operatorname{div}}

\DeclareMathOperator{\sspan}{span}

\DeclareMathOperator{\dif}{d}

\newcommand\diff{\,\dif}

\textheight=215mm \textwidth=145mm
\evensidemargin=30.0mm
\oddsidemargin=30.0mm
\hoffset=-25.4mm

\title[]
{\small Guaranteed Lower and upper bounds for eigenvalues of second order elliptic operators in any dimension}
\author[J.~Hu]
{Jun Hu$^\ast$}
\address{$^\ast$ LMAM and School of Mathematical Sciences,
 Peking University, Beijing 100871, P. R. China}
\email{hujun@math.pku.edu.cn}
\author[R. Ma]{Rui Ma$^\dagger$}
\address{$^\dagger$ LMAM and School of Mathematical Sciences,
 Peking University, Beijing 100871, P. R. China}
\email{maruipku@gmail.com}
\thanks{The  first author was supported by  the NSFC Project 11271035 and  by  the NSFC Key Project 11031006.}

%\date{November 26, 2010}
%%%%%\subjclass[2000]{65N10, 65N15, 35J25}
\keywords{Generalized Crouzeix-Raviart element, eigenvalue problem, lower bound, upper bound
\\ AMS Subject Classification: 65N30,  65N15, 35J25}
\begin{document}
\newpage
\begin{abstract}
In this paper, a new method is proposed  to produce guaranteed lower bounds for eigenvalues of general second order elliptic
 operators in any dimension.  Unlike most methods in the literature,  the proposed method only needs to solve  one discrete
  eigenvalue problem  but not involves  any base or intermediate eigenvalue problems, and does not need any
  a priori information concerning  exact eigenvalues either.  Moreover, it just assumes basic regularity of exact eigenfunctions. This  method is  defined  by  a novel generalized Crouzeix-Raviart element which is proved to yield asymptotic lower bounds for eigenvalues of general second order elliptic operators, and a simple post-processing method.
  As a byproduct, a simple and cheap method is also proposed to obtain guaranteed upper bounds for eigenvalues, which is based
   on  generalized Crouzeix-Raviart element approximate eigenfunctions, an averaging interpolation from the the  generalized Crouzeix-Raviart element space to the conforming linear  element space, and an usual  Rayleigh-Ritz procedure.
    The ingredients for the analysis consist of a crucial
     projection property of the canonical interpolation operator of the generalized Crouzeix-Raviart element,
       explicitly computable
      constants for two interpolation operators.  Numerics are provided to  demonstrate the theoretical
       results.
\end{abstract}

\maketitle
\section{Introduction}
Finding eigenvalues of partial differential operators is important in the mathematical science. Since exact eigenvalues are almost impossible, many papers and books investigate their bounds from above and below.
It is well known that upper bounds for the eigenvalues can always be found by the Rayleigh-Ritz method. While the problem of obtaining lower bounds is generally considering more difficult. The study of lower bounds for eigenvalues can date back to several remarkable works.  The finite difference method \cite{Weinberger56,Weinberger1958} can provide lower bounds on  eigenvalues of the Laplace operator  on domains of regular shape without reentrant corners. The intermediate method, developed by Weinstein \cite{Weinstein1963} admits the approximate eigenvalue from below, which, somehow, heavily
  depends on  some base problem with an explicit knowledge of eigenvalues and eigenfunctions.  Both the Kato and Lehmann-Goerisch methods can produce  lower bounds for up to the $\ell$-th eigenvalue provided that the lower bound for the $(\ell+1)$-th eigenvalue is available.  In \cite{Plum1991}, Plum developed the homotopy method based on the operator comparison theorem to bound eigenvalues, which also depends on  some base problem, i.e., the one with an explicit spectrum, which is satisfied by only simple domains. If we only consider the first
eigenvalue, we can refer to a very wonderful method proposed in
\cite{Protter60}. We also refer the interested readers to
\cite{KuttlerSigillito84} for the various numerical methods for the
eigenvalues of the Laplacian operator in two dimensions.

The finite element method can effectively approximate eigenvalues with a comprehensive analysis on error estimation, see \cite{Boffi2010,StrangFix2008}. Conforming finite element methods can provide upper bounds for eigenvalues. While, some nonconforming finite element methods can give lower bounds of eigenvalues directly when the meshsize is sufficiently small, see \cite{HuHuangLin2010,Yang2010}. In \cite{HuHuangLin2010}, Hu et al. gave a comprehensive survey of  the lower bound property of eigenvalues by nonconforming finite element methods and proposed a systematic method that can produce lower bounds for eigenvalues by using nonconforming finite element methods. The theories \cite{HuHuangLin2010} were limited to asymptotic analysis and  it is not easy to check when the meshsize is small enough in practice.  Following the theory of \cite{LarrsonThomee2008,StrangFix2008}, Liu et al. \cite{LiuOishi2013} proposed  guaranteed lower bounds for eigenvalues of the Laplace operator in the two dimensions.  The main tool therein is an explicit a priori error estimation for the conforming linear  element projection.  However, for singular eigenfunctions, it needs to compute the explicit a priori error estimation by solving an auxiliary problem. Moreover, it is difficult to  generalize  the idea therein to general second order elliptic operators. Similar  guaranteed lower bounds for eigenvalues of both Laplace and biharmonic operators in two dimensions  were given by Carstensen et al., see \cite{CarstensenGallistl2013,CarstensenGedickel2013}, through using
the nonconforming Crouzeix-Raviart and Morley elements, respectively.

 The aim of this paper is to propose new methods which are able to obtain both guaranteed lower and upper bounds for eigenvalues of general second order elliptic operators in any dimension.  The method for guaranteed lower bounds
 is derived from  asymptotic lower bounds for eigenvalues produced by
  a  generalized Crouzeix-Raviart (GCR hereafter) element  proposed herein, and a simple post-processing method.
  Unlike most methods in the literature,  this new method only  needs to solve one discrete
  eigenvalue problem  but not involves  any base or intermediate eigenvalue problems, and does not need any
  a priori information concerning  exact eigenvalues either.
  The method can be regarded as an extension to the  general second order elliptic operators in any dimension of those due to \cite{LiuOishi2013} and \cite{CarstensenGallistl2013,CarstensenGedickel2013}.  Its  novelties are as follows:
  \begin{itemize}
  \item The new method  can be used to all second order elliptic operators in any dimension while those in \cite{LiuOishi2013} and \cite{CarstensenGedickel2013} only applies for the Laplace operator in two dimensions; in addition, it has  higher accuracy than those from  \cite{LiuOishi2013} and \cite{CarstensenGedickel2013}, see comparisons in Section 7.1;
  \item  The  meshsize condition \eqref{hconstrain} below improves largely that of  \cite{CarstensenGedickel2013};
   while compared with \cite{LiuOishi2013}, the method of this paper only assumes basic regularity of exact eigenfunctions.
    \end{itemize}
    The approach for guaranteed upper bounds is based  on asymptotic  upper bounds which are obtained by a
     postprocessing method firstly  proposed in \cite{HuHuangShen2011,ShenD}, see also \cite{YangHan2014}, and a Rayleigh-Ritz procedure.  Compared with     \cite{CarstensenGedickel2013} and  \cite{LuoLin}, this new method does not need to solve an eigenvalue or source problem by  a conforming finite element method.  The ingredients for the analysis consist of a crucial
     projection property of the canonical interpolation operator of the GCR element,  explicitly computable
      constants for two interpolation operators.  Numerics are provided to  demonstrate the theoretical
       results.

The remaining paper is organized as follows. Section 2 proposes  the GCR element. Section 3 proves asymptotic lower bounds for eigenvalues. Section 4 presents the guaranteed lower bounds for eigenvalues of  general elliptic operators. Section 5 provides asymptotic upper bounds for eigenvalues. Section 6 designs guaranteed upper bounds for eigenvalues. Section 7 will give some numerical tests.
\section{Preliminaries}
In this section, we present second order elliptic boundary value and eigenvalue problems and propose a generalized Crouzeix-Raviart element for them. Throughout this paper, let $\Omega\subset\R^n$ denote a bounded domain, which, for the sake of simplicity, is supposed to be a polytope.
\subsection{Second order elliptic boundary value and eigenvalue problems}
Given $f\in L^2(\Om)$, second order elliptic boundary value problems find $u\in H^1_0(\Om)$ such that
\begin{equation}\label{ellipticproblem}
(A\na u, \na v)=(f,v)\quad\text{for any }v\in H^1_0(\Omega).
\end{equation}
Here, $A$ is a matrix-valued function on $\Omega$ and satisfies
\begin{equation*}
(q, q)\lesssim (Aq, q)\quad\text{for  any } q\in (L^2(\Omega))^n,
\end{equation*}
where $p\lesssim q$ abbreviates $p\leq Cq$ for some multiplicative  mesh-size independent constant $C>0$ which may be different at different places.  Define $$\|\nabla v\|_A:=(A\nabla v, \nabla v)^{1/2}.$$
Hence $\|\nabla\cdot\|_A$ is a norm of $H^1_0(\Omega)$.
$A(x)$ is supposed to be symmetric for all $x\in\Omega$ and each component of $A$ is piecewise Lipschitz continuous on each subdomain of domain $\Omega$.

Second order elliptic eigenvalue problems find $(\lam, u)\in\R\x H^1_0(\Om)$ such that
\begin{equation}\label{eigen}
\begin{split}
(A \nabla u, \nabla v) =\lam (u, v)\quad\text{for any }v\in H^1_0(\Omega)\text{ and }\|u\|=1.
\end{split}
\end{equation}
Problem \eqref{eigen} has a sequence of eigenvalues
\begin{equation*}
0<\lam_1\leq \lam_2\leq \lam_3\leq\cdots\nearrow +\infty,
\end{equation*}
and   corresponding eigenfunctions
\begin{equation*}
u_1, u_2, u_3, \cdots,
\end{equation*}
which can be chosen to satisfy
\begin{equation*}
( u_i, u_j)=\delta_{ij}, i, j=1, 2, \cdots.
\end{equation*}
Define
\begin{equation}\label{ell}
E_{\ell}=\sspan\{u_1,u_2, \cdots, u_{\ell}\}.
\end{equation}
Eigenvalues and eigenfunctions satisfy the following
well-known Rayleigh-Ritz principle:
\begin{equation}\label{minmax}
\lam_k=\min\limits_{\dim V_k=k, V_k\subset H^1_0(\Omega)}\max\limits_{v\in
V_k}\frac{(A\nabla v,\nabla v)}{(v,v)}=\max\limits_{u\in
E_{k}}\frac{(A\nabla u,\nabla u)}{(u,u)}.
\end{equation}
\subsection{The generalized Crouzeix-Raviart element}
Suppose that $\overline{\Omega}$ is covered exactly by shape-regular partitions $\cT$  consisting of $n$-simplices  in
$n$ dimensions.  Let  $\cE$ denote the set of  all
$n-1$ dimensional subsimplices,   and  $\cE(\Omega)$ denote the set of
all the $n-1$ dimensional interior subsimplices, and  $\cE(\partial \Omega)$ denote the set of
all the $n-1$ dimensional boundary subsimplices. Given $K\in\mathcal{T}$, $h_K$ denotes the diameter of $K$ and $h:=\max_{K\in\mathcal{T}}h_K$.
Let $|K|$ denote the measure of element $K$ and $|E|$ the measure of $n-1$ dimensional subsimplex $E$. Given $E\in \cE$, let $\nu_E$ be its unit normal vector
and $[\cdot]$ be jumps of piecewise functions over $E$, namely
$$
[v]:=v|_{K^+}-v|_{K^-}
$$
for piecewise functions $v$ and any two elements $K^+$ and $K^-$ which share the common $n-1$ dimensional subsimplex $E$. Note that
$[\cdot]$  becomes  traces of functions  on $E$ for  boundary subsimplex
$E$.

Given $K\in\mathcal{T}$ and an integer $m\geq0$, let $P_m(K)$ denote the space of polynomials of degree$\leq m$ over $K$. The simplest nonconforming finite element for Problem \eqref{ellipticproblem} is the Crouzeix-Raviart (CR hereafter ) element proposed in \cite{CrouzeixRaviart(1973)}.
The corresponding element space $V_{\rm CR}$ over $\mathcal{T}$ is defined by
\begin{equation}
 V_{\rm CR}:=\begin{array}[t]{l}\big\{v\in L^2(\Om):
 v|_{K}\in P_1(K) \text{ for each }K\in \cT,
 \int_E[v]dE=0,\\
 \text{ for all } E\in\cE(\Omega)\,,%\\[1.5ex]
 \text{ and }\int_E vdE=0 \text{ for all }E\in\cE(\partial\Omega)
 \big\}\,.
 \end{array}\nn
\end{equation}
Since the CR element can't be proved to produce lower bounds for eigenvalues of the Laplace operator  on general meshes when eigenfunctions are smooth, see \cite{Armentano,HuHuangShen2014}. Hu et al. \cite{HuHuangLin2010} proposed the enriched Crouzeix-Raviart (ECR hereafter) element which was proved to produce lower bounds for eigenvalues of the Laplace operator in the asymptotic sense. The corresponding shape function space is as follows
 \begin{equation*}
{\rm ECR}(K):=P_1(K)+\sspan\Big\{\sum\limits_{i=1}^nx_i^2\Big\}\quad \text{for any
}K\in\cT.
 \end{equation*}
  The ECR element space $V_{\rm ECR}$ is then defined by
\begin{equation}\label{ECR}
 V_{\rm ECR}:=\begin{array}[t]{l}\big\{v\in L^2(\Om):
 v|_{K}\in {\rm ECR}(K) \text{ for each }K\in \cT,
 \int_E[v]ds=0,\\
 \text{ for all } E\in\cE(\Omega)\,,%\\[1.5ex]
 \text{ and }\int_E vds=0 \text{ for all   $E\in\cE(\partial\Omega)$ }
 \big\}\,.
 \end{array}\nn
\end{equation}

However, the ECR element cannot produce lower bounds for eigenvalues of general second order elliptic operators, which motivates us to generalize the ECR element to more general cases. To this end, let $\bar{A}$ be a piecewise positive-definite  constant matrix  with respect to $\mathcal{T}$, which is an approximation of $A$.
For example, we can choose $\bar{A}|_K$ to be equal to the value of $A$ at the centroid of $K$ or the integral mean on $K$. Suppose
\begin{equation}
\bar{A}|_K=\begin{pmatrix}a_{11}&a_{12}&\cdots&a_{1n}\\
a_{21}&a_{22}&\cdots&a_{2n}\\
\cdots&\cdots&\cdots&\cdots\\
a_{n1}&a_{n2}&\cdots&a_{nn}
\end{pmatrix}.
\end{equation}
Let $\bar{B}$ denote the inverse of $\bar{A}$ as follows
\begin{equation}
\bar{B}|_K=\bar{A}^{-1}|_K=\begin{pmatrix}b_{11}&b_{12}&\cdots&b_{1n}\\
b_{21}&b_{22}&\cdots&b_{2n}\\
\cdots&\cdots&\cdots&\cdots\\
b_{n1}&b_{n2}&\cdots&b_{nn}
\end{pmatrix}.
\end{equation}
The centroid of $K$ is denoted by ${\rm mid}(K)$.  The coordinate of ${\rm mid}(K)$ is denoted by $(M_1,M_2,\cdots,M_n)$. The vertices of $K$ are denoted by $a_p=(x_{1p},x_{2p},\cdots,x_{np}),1\leq p\leq n+1$. Define
$$
H=\sum^n_{i=1}b_{ii}\sum_{p<q}(x_{ip}-x_{iq})^2+2\sum_{i<j}b_{ij}\sum_{p<q}(x_{ip}-x_{iq})(x_{jp}-x_{jq}),
$$
and
\begin{equation}\label{bubblefunction}
% \nonumber to remove numbering (before each equation)
\phi_K = \frac{n+2}{2}-\frac{n(n+1)^2(n+2)}{2H}\left(x-{\rm mid}(K)\right)^T\bar{B}|_K\left(x-{\rm mid}(K)\right).\\
\end{equation}
For two dimensions, the constant $H$ and function $\phi_K$ are presented as follows, respectively,
$$
H=b_{11}\sum_{p<q}(x_{1p}-x_{1q})^2+b_{22}\sum_{p<q}(x_{2p}-x_{2q})^2+2b_{12}\sum_{p<q}(x_{1p}-x_{1q})(x_{2p}-x_{2q}),
$$
and
\begin{equation}\label{twoDex}
\phi_K = 2-\frac{36}{H}(b_{11}(x_1-M_1)^2+b_{22}(x_2-M_2)^2+2b_{12}(x_1-M_1)(x_2-M_2)).
\end{equation}
\begin{lemma}
\label{Lemma:caculation}
Given $K\in\mathcal{T}$, there holds that
$$\frac{1}{|K|}\int_K\phi_Kdx=1.
$$
Moreover, for any $n-1$ dimensional subsimplex $E\subset\partial K$, there holds that
$$\int_E\phi_{K}ds=0.$$
\end{lemma}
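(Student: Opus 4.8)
My plan is to reduce both identities to the evaluation of the averages of the single quadratic form $Q(x):=(x-{\rm mid}(K))^T\bar B|_K(x-{\rm mid}(K))$ over the simplex $K$ and over its faces $E$, since $\phi_K=\frac{n+2}{2}-\frac{n(n+1)^2(n+2)}{2H}Q$. The central tool is the classical formula for the second moments of the uniform distribution on a simplex: for a $d$-simplex $S$ with vertices $w_0,\dots,w_d$, centroid $c_\ast$, and its natural $d$-dimensional measure,
\begin{equation*}
\frac{1}{|S|}\int_S (x-c_\ast)(x-c_\ast)^T\,ds=\frac{1}{(d+1)(d+2)}\sum_{i=0}^d(w_i-c_\ast)(w_i-c_\ast)^T,
\end{equation*}
which follows from $\int_S\lambda_i\lambda_j\,ds=\frac{|S|}{(d+1)(d+2)}(1+\delta_{ij})$ for the barycentric coordinates $\lambda_i$. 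I will also use the purely algebraic identity that, for $m$ points with centroid $c_\ast$, $\sum_{p}(a_p-c_\ast)(a_p-c_\ast)^T=\frac{1}{m}\sum_{p<q}(a_p-a_q)(a_p-a_q)^T$; this is exactly the combination that builds $H$, since by symmetry of $\bar B|_K$ one has $H=\sum_{p<q}(a_p-a_q)^T\bar B|_K(a_p-a_q)$.

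For the first identity I would write $\frac{1}{|K|}\int_K Q\,dx=\tr\!\big(\bar B|_K\,\frac{1}{|K|}\int_K(x-{\rm mid}(K))(x-{\rm mid}(K))^T\,dx\big)$, apply the moment formula with $d=n$ (so $K$ has $n+1$ vertices), and then the algebraic identity with $m=n+1$ to convert the sum over vertices into the sum over edge vectors $a_p-a_q$. Taking the trace against $\bar B|_K$ turns each $(a_p-a_q)(a_p-a_q)^T$ into the scalar $(a_p-a_q)^T\bar B|_K(a_p-a_q)$, so the whole average collapses to $\frac{H}{(n+1)^2(n+2)}$. Substituting into $\phi_K$ gives $\frac{n+2}{2}-\frac{n(n+1)^2(n+2)}{2H}\cdot\frac{H}{(n+1)^2(n+2)}=\frac{n+2}{2}-\frac{n}{2}=1$.

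For the face identity, fix the face $E$ opposite a vertex $a_r$, with centroid $c_E=\frac{1}{n}\sum_{p\neq r}a_p$, and split $x-{\rm mid}(K)=(x-c_E)+(c_E-{\rm mid}(K))$. Averaging $Q$ over $E$ kills the cross term because $\frac{1}{|E|}\int_E(x-c_E)\,ds=0$, leaving the within-face term $\frac{1}{|E|}\int_E(x-c_E)^T\bar B|_K(x-c_E)\,ds$ plus the offset $(c_E-{\rm mid}(K))^T\bar B|_K(c_E-{\rm mid}(K))$. The first I evaluate with the moment formula for the $(n-1)$-simplex $E$ (so $d=n-1$, $m=n$ vertices), producing $\frac{1}{n^2(n+1)}\sum_{p<q;\,p,q\neq r}(a_p-a_q)^T\bar B|_K(a_p-a_q)$; for the second I use $c_E-{\rm mid}(K)=\frac{1}{n(n+1)}\sum_{p\neq r}(a_p-a_r)$.

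The heart of the argument, and the step I expect to be the main obstacle, is the bookkeeping that recombines these two contributions into a clean multiple of $H$. Introducing the edge vectors $g_p:=a_p-a_r$ $(p\neq r)$ and the scalars $T_1:=\sum_{p\neq r}g_p^T\bar B|_K g_p$ and $T_2:=\sum_{p<q;\,p,q\neq r}g_p^T\bar B|_K g_q$, one checks $H=nT_1-2T_2$, that the within-face term equals $\frac{(n-1)T_1-2T_2}{n^2(n+1)}$, and that the offset term equals $\frac{T_1+2T_2}{n^2(n+1)^2}$. Placing both over the common denominator $n^2(n+1)^2$, the numerator simplifies to $n^2T_1-2nT_2=n(nT_1-2T_2)=nH$, whence $\frac{1}{|E|}\int_E Q\,ds=\frac{H}{n(n+1)^2}$. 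Feeding this into $\phi_K$ yields $\frac{n+2}{2}-\frac{n(n+1)^2(n+2)}{2H}\cdot\frac{H}{n(n+1)^2}=0$, so $\int_E\phi_K\,ds=0$. The only real care needed is tracking which vertex pairs touch $a_r$, since this is precisely what separates $H$ into $H_r=(n-1)T_1-2T_2$ and the remaining $T_1$ and makes the cancellation close.
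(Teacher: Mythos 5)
Your proof is correct and follows essentially the same route as the paper: both identities are reduced to exact second moments of a simplex (equivalently, the values of $\int\theta_p\theta_q$ over $K$ and over a face $E$), contracted against $\bar B|_K$ and matched with the constant $H$. The only difference is organizational---you evaluate the face integral by a parallel-axis decomposition about the face centroid, whereas the paper expands $x-{\rm mid}(K)$ directly in the barycentric coordinates of $E$; both give $\frac{1}{|E|}\int_E (x-{\rm mid}(K))^T\bar B|_K(x-{\rm mid}(K))\,ds=\frac{H}{n(n+1)^2}$ and hence the same conclusion.
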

\begin{proof}
Let $\theta_j=\theta_j(x),1\leq j\leq n+1$ denote the barycentric coordinates of $K$ associated to vertex $a_j$. For any integers $\alpha_j\geq 0,1\leq j\leq n+1$, one has
\begin{equation*}
  \int_K\theta^{\alpha_1}_1\theta^{\alpha_2}_2\cdots\theta^{\alpha_{n+1}}_{n+1}dx=\frac{\alpha_1!\alpha_2!\cdots\alpha_{n+1}!n!}{(\alpha_1+\alpha_2+\cdots+\alpha_{n+1}+n)!}|K|.
\end{equation*}
This leads to
\begin{equation*}
\begin{split}
 \int_K(x_i-M_i)(x_j-M_j)dx&=\int_K\sum^{n+1}_{p=1}(\theta_p-\frac{1}{n+1})x_{ip}\sum^{n+1}_{q=1}(\theta_q-\frac{1}{n+1})x_{jq}dx\\
 &=\frac{|K|}{(n+1)^2(n+2)}\left(\sum^{n+1}_{p=1}nx_{ip}x_{jp}-\sum_{p\neq q}x_{ip}x_{jq}\right)\\
 &=\frac{|K|}{(n+1)^2(n+2)}\sum_{p<q}(x_{ip}-x_{iq})(x_{jp}-x_{jq}).
 \end{split}
\end{equation*}
By the definition of $\phi_K$ in \eqref{bubblefunction}, this yields
\begin{equation*}
\begin{split}
  \frac{1}{|K|}\int_K\phi_Kdx&=\frac{n+2}{2}-\frac{1}{|K|}\frac{n(n+1)^2(n+2)}{2H}\frac{|K|}{(n+1)^2(n+2)}\\
  &\quad \times\sum_{i,j=1}^n\sum_{p<q}b_{ij}(x_{ip}
  -x_{iq})(x_{jp}-x_{jq})\\
  &=\frac{n+2}{2}-\frac{n}{2H}H\\
  &=1.
\end{split}
\end{equation*}
Given $n-1$ dimensional subsimplex $E\subset\partial K$, such that $\theta_1|_E\equiv0$. A similar equality holds
\begin{equation*}
  \int_E\theta^{\alpha_2}_2\cdots\theta^{\alpha_{n+1}}_{n+1}ds=\frac{\alpha_2!\cdots\alpha_{n+1}!(n-1)!}{(\alpha_2+\cdots+\alpha_{n+1}+n-1)!}|E|.
\end{equation*}A direct calculation yields
\begin{equation*}
\begin{split}
 \int_E(x_i-M_i)(x_j-M_j)ds=&\int_E\bigg(-\frac{x_{i1}}{n+1}+\sum^{n+1}_{p=2}(\theta_p-\frac{1}{n+1})x_{ip}\bigg)\\
 &\quad \times\bigg(-\frac{x_{j1}}{n+1}
 +\sum^{n+1}_{q=2}(\theta_q-\frac{1}{n+1})x_{jq}\bigg)ds\\
 =&\frac{|E|}{n(n+1)^2}\bigg(\sum^{n+1}_{p=1}nx_{ip}x_{jp}-\sum_{p\neq q}x_{ip}x_{jq}\bigg)\\
 =&\frac{|E|}{(n+1)^2(n+2)}\sum_{p<q}(x_{ip}-x_{iq})(x_{jp}-x_{jq}).
 \end{split}
\end{equation*}
This shows that
\begin{equation*}
 \int_E\phi_Kds=\frac{n+2}{2}|E|-\frac{n(n+1)^2(n+2)}{2H}\frac{|E|}{n(n+1)^2}H=0,
\end{equation*}
which completes the proof.
\end{proof}
Lemma \ref{Lemma:caculation} allows for the definition of the following bubble function space
$$
V_{\rm B}:=\{v\in L^2(\Omega):v|_K\in\sspan\{\phi_K\}\text{ for all }K\in\mathcal{T}\}.
$$
The GCR element space $V_{\rm GCR}$ is then defined by
\begin{equation}\label{GCRelement}
  V_{\rm GCR}:=V_{\rm CR}+V_{\rm B}.
\end{equation}
If $A(x)\equiv 1$, then $b_{ij}=\delta_{ij}$, $H=\sum_{p<q}|a_p-a_q|^2$ and
\begin{equation*}
\phi_K= \frac{n+2}{2}-\frac{n(n+1)^2(n+2)}{2H}\sum^{n}_{i=1}(x_i-M_i)^2\in{\rm ECR}(K).
\end{equation*}
Hence, in this case, $V_{\rm GCR}=V_{\rm ECR}$. The GCR element has the following important property.
\begin{lemma}
\label{edgeconstant}
Given $v\in V_{\rm GCR}$, $\bar{A}\nabla v\cdot\nu_E$ is a constant on $E$ for all $E\in\mathcal{E}$.
\end{lemma}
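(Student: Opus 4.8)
The plan is to reduce everything to the two building blocks of the decomposition $V_{\rm GCR}=V_{\rm CR}+V_{\rm B}$ and to exploit that $\bar{A}$ is piecewise constant. Fix $E\in\cE$ and an element $K$ with $E\subset\partial K$; since $v$ is piecewise-defined, the claim is really that the restriction $\bar{A}|_K\na(v|_K)\cdot\nu_E$ is constant along $E$, and it suffices to prove this from each element touching $E$ separately. Writing $v|_K=v_1+c\,\phi_K$ with $v_1\in P_1(K)$ affine and $c\in\R$, linearity lets me treat the two summands in turn.

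The affine part is immediate. Since $v_1$ is affine, $\na v_1$ is a constant vector on $K$, and because $\bar{A}|_K$ is a constant matrix, $\bar{A}|_K\na v_1$ is a fixed vector on all of $K$; hence $\bar{A}|_K\na v_1\cdot\nu_E$ is constant, in particular on $E$.

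The heart of the argument is the bubble part, and here the key computation is the gradient of $\phi_K$. Using that $\bar{B}|_K$ is symmetric (being the inverse of the symmetric $\bar{A}|_K$), differentiating the quadratic form in \eqref{bubblefunction} gives
$$\na\phi_K=-\frac{n(n+1)^2(n+2)}{H}\,\bar{B}|_K\big(x-{\rm mid}(K)\big).$$
Multiplying by $\bar{A}|_K$ and invoking $\bar{A}|_K\bar{B}|_K=I$, the matrices cancel and I obtain the pure radial field
$$\bar{A}|_K\na\phi_K=-\frac{n(n+1)^2(n+2)}{H}\big(x-{\rm mid}(K)\big).$$
This cancellation is exactly what placing $\bar{B}$ inside the quadratic defining $\phi_K$ is designed to achieve, and it is the crux of the lemma: the anisotropic flux of the bubble is just a scalar multiple of the vector pointing from the centroid.

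It then remains to take the normal component. For any $x\in E$ the scalar $x\cdot\nu_E$ is independent of $x$, since $(x-y)\cdot\nu_E=0$ for all $x,y\in E$ by definition of the unit normal to the hyperplane containing $E$; thus
$$\bar{A}|_K\na\phi_K\cdot\nu_E=-\frac{n(n+1)^2(n+2)}{H}\big(x\cdot\nu_E-{\rm mid}(K)\cdot\nu_E\big)$$
is constant on $E$. Adding the affine contribution shows $\bar{A}\na v\cdot\nu_E$ is constant on $E$, and as $E$ and $K$ were arbitrary the claim follows. I do not anticipate a genuine obstacle; the only point requiring care is the symmetry of $\bar{B}|_K$, which is what justifies differentiating the quadratic form as $2\bar{B}|_K(x-{\rm mid}(K))$ and makes the product $\bar{A}|_K\bar{B}|_K$ collapse to the identity.
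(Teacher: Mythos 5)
Your proof is correct and follows exactly the route the paper sketches: the paper's one-line argument likewise rests on the facts that $x\cdot\nu_E$ is constant on $E$ and that $\bar{B}|_K=\bar{A}|_K^{-1}$ makes $\bar{A}|_K\na\phi_K$ a scalar multiple of $x-{\rm mid}(K)$, combined with the decomposition \eqref{GCRelement}. You have simply written out the details the authors left implicit, and the gradient computation and the handling of the affine part are both accurate.
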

\begin{proof}
Given $E\in\mathcal{E}$, $x\cdot\nu_E$ is a constant on $E$. The fact that $\bar{B}$ is the inverse of $\bar{A}$, \eqref{bubblefunction} and \eqref{GCRelement} imply that $\bar{A}\nabla v\cdot\nu_E$ is a constant on $E$.
\end{proof}
\subsection{The GCR element for second order elliptic boundary value problems}
The generalized  Crouzeix-Raviart element method of Problem \eqref{ellipticproblem} finds  $u_{\rm GCR}\in V_{\rm GCR}$ such that
\begin{equation}\label{GCRelliptic}
(A\na_{\rm NC} u_{\rm GCR}, \na_{\rm NC} v)=(f,v)\quad\text{for any }v\in V_{\rm GCR}.
\end{equation}
Since  $\int_{E}[v]dE=0\text{ for all }E\in\mathcal{E}(\Omega)$ and $\int_EvdE=0\text{ for all }E\in\mathcal{E}(\partial\Omega)$. From the theory of \cite{HuMa2014}, there holds that
\begin{equation*}
  \|\nabla_{\rm NC}(u-u_{\rm GCR})\|\lesssim\|\nabla u-\Pi_0\nabla u\|+osc(f),
\end{equation*}
where $\Pi_0$ denotes the piecewise constant projection,  and the oscillation of data reads
$$osc(f)=\left(\sum_{K\in\mathcal{T}}h_K^2\big[\inf_{\bar{f}\in P_r(K)}\|f-\bar{f}\|^2_{L^2(K)}\big]\right)^{1/2}$$
with  arbitrary $r\geq 0$. The optimal convergence of the GCR element follows immediately.
\begin{remark}Thanks to the definition of \eqref{GCRelement}, $u_{\rm GCR}$ can be written as $u_{\rm GCR}=u_{\rm CR}+u_{\rm B}$, where $u_{\rm CR}\in V_{\rm CR}$ and $u_{\rm B}\in V_{\rm B}$.
When $A$ is a piecewise constant matrix-valued function, an integration by parts yields the following orthogonality:
\begin{equation}\label{Orthogonalitysecond}
  (A\nabla u_{\rm CR},\nabla\phi_K)_{L^2(K)}=(-\div(A\nabla u_{\rm CR}),\phi_K)_{L^2(K)}+\sum_{E\subset\partial K}\int_EA\nabla u_{\rm CR}\cdot\nu_E\phi_Kds=0.
\end{equation}
This leads to
\begin{equation}\label{bubblesoving}
(A\nabla u_{\rm B},\nabla\phi_K)_{L^2(K)}=(f,\phi_K)_{L^2(K)}\quad\text{for any }K\in\mathcal{T},
\end{equation}
and
\begin{equation}\label{CRproblem}
  (A\nabla_{\rm NC} u_{\rm CR},\nabla_{\rm NC} v)=(f,v)\quad\text{for any }v\in V_{\rm CR}.
\end{equation}
Consequently, $u_{\rm CR}$ is the discrete solution of Problem \eqref{ellipticproblem} by the CR element.
Hence we can solve the GCR element equation \eqref{GCRelliptic} by solving \eqref{bubblesoving} on each $K$ and \eqref{CRproblem} for the CR element, respectively.
For general cases, the orthogonality  \eqref{Orthogonalitysecond} does not hold. However, $u_{\rm B}$ can be eliminated a prior by a static condensation procedure.
\end{remark}
\subsection{The GCR element for second order elliptic eigenvalue problems}
\label{GCReigenvalue}
We consider the discrete
eigenvalue problem: Find $(\lam_{\rm GCR}, u_{\rm GCR})\in \R\x V_{\rm GCR}$ such that
\begin{equation}\label{discreteeigentotal}
\begin{split}
(A\nabla_{\rm NC}u_{\rm GCR}, \nabla_{\rm NC}v)&=\lam_{\rm GCR} (u_{\rm GCR},v) \text{ for any }
v\in V_{\rm GCR} \text{ and }  \|u_{\rm GCR}\|=1.
\end{split}
\end{equation}

Let $Z=\dim  V_{\rm GCR}$. The discrete problem
\eqref{discreteeigentotal} admits a sequence of discrete eigenvalues
$$
0<\lam_{1,\rm GCR}\leq\lam_{2,\rm GCR}\leq\cdots\leq\lam_{Z,\rm GCR},
$$
and corresponding eigenfunctions
$$
u_{1,\rm GCR}, u_{2,\rm GCR}, \cdots, u_{Z,\rm GCR}\,.
$$
Define the discrete counterpart of $E_{\ell}$ by
\begin{equation}\label{discretespan}
E_{\ell, \rm GCR}=\sspan\{u_{1,\rm GCR},u_{2,\rm GCR}, \cdots, u_{\ell,\rm GCR}\}.
\end{equation}
Then,  we have the following discrete Rayleigh-Ritz principle:
\begin{equation}\label{disrceteRayleigh}
\lam_{k,\rm GCR}=\min\limits_{\dim V_{k}=k, V_{k}\subset
V_{\rm GCR}}\max\limits_{v\in V_{k}}\frac{(A\nabla_{\rm NC}v,\nabla_{\rm NC}v)}{(
v,v)}=\max\limits_{u\in E_{k,\rm GCR}}\frac{(A\nabla_{\rm NC}u,\nabla_{\rm NC}u)}{(
u,u)}.
\end{equation}

According to the theory of nonconforming eigenvalue approximations \cite{BabuskaOsborn1991,HuHuangLin2010}, the following a priori estimate holds true.
\begin{lemma}Let $u$ be eigenfunctions of Problem \eqref{eigen}, and $u_{\rm GCR}$ be discrete eigenfunctions of Problem \eqref{GCReigenvalue}. Suppose $u\in H^1_0(\Omega)\cap H^{1+s}(\Omega)$ with $0<s\leq1$. Then,
\begin{equation}\label{eqorder}
\|u-u_{\rm GCR}\|+h^s\|\nabla_{\rm NC}(u-u_{\rm GCR})\|_A\lesssim h^{2s}|u|_{1+s}.
\end{equation}
\end{lemma}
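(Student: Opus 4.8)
The plan is to apply the abstract theory of nonconforming eigenvalue approximation of Babu\v{s}ka and Osborn \cite{BabuskaOsborn1991} in the form developed in \cite{HuHuangLin2010}, so that the task reduces to verifying, for the GCR element, two ingredients: the approximation power of $V_{\rm GCR}$ for $H^{1+s}$ functions and a consistency estimate controlling the nonconformity. The source-problem estimate already recorded above, $\|\na_{\rm NC}(u-u_{\rm GCR})\|\lesssim\|\na u-\Pi_0\na u\|+osc(f)$, supplies the former: for an eigenfunction $u\in H^{1+s}(\Omega)$ the piecewise-constant projection obeys $\|\na u-\Pi_0\na u\|\lesssim h^s|u|_{1+s}$, and since the right-hand side $f=\lam u$ of \eqref{eigen} lies at least in $L^2(\Omega)$, the oscillation term is of higher order. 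Thus the best-approximation contribution to the eigenfunction error is $O(h^s)$.

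For the energy estimate I would start from the second Strang lemma applied to the discrete problem \eqref{discreteeigentotal}, splitting the error into interpolation error plus consistency error, the latter being the crux. For $w\in V_{\rm GCR}$, integrating by parts element by element and using $-\div(A\na u)=\lam u$ on each $K$ together with the continuity of the normal flux $A\na u\cdot\nu_E$ across interior faces gives $(A\na_{\rm NC}u,\na_{\rm NC}w)-\lam(u,w)=\sum_{E\in\cE}\int_E A\na u\cdot\nu_E\,[w]\,ds$. Because every $w\in V_{\rm GCR}$ satisfies $\int_E[w]\,ds=0$ on interior faces and $\int_E w\,ds=0$ on boundary faces, I may subtract the face average of $A\na u\cdot\nu_E$ and bound the remainder by a trace-and-approximation inequality, obtaining a consistency bound $\lesssim h^s|u|_{1+s}\,\|\na_{\rm NC}w\|_A$. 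Combined with the interpolation estimate this yields $\|\na_{\rm NC}(u-u_{\rm GCR})\|_A\lesssim h^s|u|_{1+s}+\|u-u_{\rm GCR}\|$, in which the last term is lower order.

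For the $L^2$ estimate I would run an Aubin--Nitsche duality argument. Let $\psi\in H^1_0(\Omega)$ solve the auxiliary problem $(A\na\psi,\na v)=(u-u_{\rm GCR},v)$ for all $v\in H^1_0(\Omega)$; by elliptic regularity $\psi\in H^{1+s}(\Omega)$ with $|\psi|_{1+s}\lesssim\|u-u_{\rm GCR}\|$. Testing with $v=u-u_{\rm GCR}$ interpreted piecewise, so that a second consistency term associated with $\psi$ appears, and inserting the energy estimate of the previous step together with the $O(h^s)$ consistency and approximation of $\psi$, I gain an extra factor $h^s$ and reach $\|u-u_{\rm GCR}\|\lesssim h^{2s}|u|_{1+s}$. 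Feeding this back into the energy estimate absorbs the lower-order term and establishes \eqref{eqorder}.

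The main obstacle is the consistency analysis when $A$ is genuinely variable rather than piecewise constant. The clean flux cancellation above is exact only for $A=\bar A$; for the true operator one must write $A=\bar A+(A-\bar A)$ and control the perturbation via the piecewise Lipschitz continuity of $A$, which contributes an extra $O(h)$ factor and is therefore harmless. Here Lemma \ref{edgeconstant}, asserting that $\bar A\na w\cdot\nu_E$ is constant on each face for $w\in V_{\rm GCR}$, is exactly what makes the averaged flux terms pair cleanly with the face-mean-zero degrees of freedom; confirming that the residual consistency contributions remain $O(h^s|u|_{1+s})$ is the technical heart of the argument.
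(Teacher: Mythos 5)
The paper offers no proof of this lemma at all: it is quoted directly from the abstract theory of \cite{BabuskaOsborn1991,HuHuangLin2010}, so your fleshed-out version of that standard machinery is in the spirit of what the authors intend. Your outline (approximation power plus consistency, then Aubin--Nitsche) is the right skeleton, and your closing remark about splitting $A=\bar A+(A-\bar A)$ and using Lemma \ref{edgeconstant} correctly identifies where the GCR-specific work lives.

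Two points in your sketch would not survive as written. First, the consistency term $\sum_{E}\int_E A\nabla u\cdot\nu_E\,[w]\,ds$ is only meaningful when the normal trace of $A\nabla u$ lies in $L^2(E)$, which for $u\in H^{1+s}(\Omega)$ fails in general when $0<s\le 1/2$; yet the lemma is claimed for all $0<s\le1$. The cure is the one you already quote in your first paragraph: the medius-type estimate of \cite{HuMa2014}, $\|\nabla_{\rm NC}(u-u_{\rm GCR})\|\lesssim\|\nabla u-\Pi_0\nabla u\|+\osc(f)$, is proved without element-by-element integration by parts against traces and is exactly the device the paper invokes to cover minimal regularity; you should route the energy estimate entirely through it rather than through the second Strang lemma. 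Second, your duality argument treats $u_{\rm GCR}$ as if it were the discrete solution of the source problem with datum $\lambda u$, but it solves \eqref{discreteeigentotal} with datum $\lambda_{\rm GCR}u_{\rm GCR}$; the residual $\lambda(u,\cdot)-\lambda_{\rm GCR}(u_{\rm GCR},\cdot)$ contributes terms involving $|\lambda-\lambda_{\rm GCR}|$ and $\|u-u_{\rm GCR}\|$ that must be disentangled (this is precisely what the operator-theoretic Babu\v{s}ka--Osborn formulation, working with the solution operators $T$ and $T_h$, is designed to do). Neither issue is fatal, but both need to be addressed before the sketch becomes a proof of \eqref{eqorder} in the stated generality.
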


 We introduce the interpolation operator $\Pi_{\rm GCR}:H^1_0(\Omega)\rightarrow V_{\rm GCR}$ by
\begin{equation}\label{inteGCR}
  \begin{split}
  \int_E\Pi_{\rm GCR}vds=\int_Evds\text{ for any }E\in\cE,\\
  \int_K\Pi_{\rm GCR}vdx=\int_Kvdx\text{ for any }K\in\mathcal{T}.
  \end{split}
\end{equation}
Given $w\in V_{\rm GCR}$, an integration by parts yields that
\begin{equation*}
\begin{split}
(\bar{A}\nabla_{\rm NC}(v-\Pi_{\rm GCR}v),\nabla_{\rm NC} w)&=-(v-\Pi_{\rm GCR}v,\div_{\rm NC}(\bar{A}\nabla_{\rm NC} w))\\
&+\sum_{K\in\mathcal{T}}\sum_{E\subset\partial K}\int_E(v-\Pi_{\rm GCR}v)\bar{A}\nabla w\cdot\nu_E ds.
\end{split}
\end{equation*}
Since $\div_{\rm NC}(\bar{A}\nabla_{\rm NC} w)$ is a piecewise constant on $\Omega$ and Lemma \ref{edgeconstant} proves that $\bar{A}\nabla w\cdot\nu_E$ is a constant on $n-1$ dimensional subsimplex $E$, for any $v\in H^1_0(\Omega)$, the following orthogonality holds true
\begin{equation}\label{projectionGradu}
(\bar{A}\nabla_{\rm NC}(v-\Pi_{\rm GCR}v),\nabla_{\rm NC} w)=0\quad\text{for any }w\in V_{\rm GCR}.
\end{equation}
This orthogonality is important in providing lower bounds for eigenvalues, see more details in the following two sections. Moreover, this yields
\begin{equation}\label{normortho}
 \|\nabla_{\rm NC}\Pi_{\rm GCR}v\|_{\bar{A}}^2+ \|\nabla_{\rm NC}(v-\Pi_{\rm GCR}v)\|_{\bar{A}}^2=\|\nabla v\|_{\bar{A}}^2.
\end{equation}
\section{Asymptotic lower bounds for eigenvalues}
We assume $A$ is a  piecewise constant matrix-valued function in this section. Following the theory of \cite{HuHuangLin2010}, we prove that the eigenvalues produced by the GCR element are lower bounds when the meshsize is small enough.

Let $(\lam,u)$ and $(\lam_{\rm GCR},u_{\rm GCR})$ be solutions of \eqref{eigen} and \eqref{discreteeigentotal}, respectively. First, note that  $u-\Pi_{\rm GCR}u$ has vanishing mean on each $K\in\mathcal{T}$. It follows from the Poincar$\acute{\rm e}$ inequality that
$$
\|u-\Pi_{\rm GCR}u\|\lesssim h\|\nabla_{\rm NC}(u-\Pi_{\rm GCR}u)\|.
$$
Suppose $u\in H^{1+s}(\Omega),0<s\leq1$. Following from the usual interpolation theory, there holds that
\begin{equation}\label{interpolationestimate}
\|u-\Pi_{\rm GCR}u\|\lesssim h^{1+s}|u|_{1+s}.
\end{equation}
\begin{theorem}
\label{asymptoticLowerbound}
Suppose that $A$ is a piecewise constant matrix-valued function. Assume that $u\in H^1_0(\Omega)\cap H^{1+s}(\Omega)$ with $0<s\leq1$ and that $h^{2s}\lesssim\|\nabla_{\rm NC}(u-u_{\rm GCR})\|_A^2$
. Then,
$$
\lam_{\rm GCR}\leq\lam,
$$
provided that $h$ is small enough.
\end{theorem}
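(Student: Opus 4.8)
The plan is to start from an exact algebraic identity for $\lam_{\rm GCR}-\lam$ and to show that, under the stated hypothesis, its right–hand side is forced to have a definite sign once $h$ is small. Writing $e:=u-u_{\rm GCR}$ and using $\|u\|=\|u_{\rm GCR}\|=1$ together with $\lam=\|\nabla u\|_A^2$ and $\lam_{\rm GCR}=\|\nabla_{\rm NC}u_{\rm GCR}\|_A^2$, a direct expansion of $\|\nabla_{\rm NC}e\|_A^2-\lam\|e\|^2$ gives
\begin{equation*}
\lam_{\rm GCR}-\lam=\big(\|\nabla_{\rm NC}e\|_A^2-\lam\|e\|^2\big)+2\,C,\qquad C:=(A\nabla u,\nabla_{\rm NC}u_{\rm GCR})-\lam(u,u_{\rm GCR}).
\end{equation*}
The first bracket is the leading \emph{positive} quantity: by the a priori bound \eqref{eqorder} one has $\|e\|^2\lesssim h^{4s}$, while the hypothesis $h^{2s}\lesssim\|\nabla_{\rm NC}e\|_A^2$ shows that $\|\nabla_{\rm NC}e\|_A^2$ dominates, so $\|\nabla_{\rm NC}e\|_A^2-\lam\|e\|^2\gtrsim h^{2s}>0$ for $h$ small. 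Everything therefore reduces to controlling the consistency term $C$.

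The key step is to rewrite $C$ using the projection property \eqref{projectionGradu}, which for a piecewise constant $A$ (so that $\bar A=A$) reads $(A\nabla_{\rm NC}(u-\Pi_{\rm GCR}u),\nabla_{\rm NC}w)=0$ for every $w\in V_{\rm GCR}$. Choosing $w=u_{\rm GCR}$ and then invoking the discrete equation \eqref{discreteeigentotal} with test function $\Pi_{\rm GCR}u$ I obtain
\begin{equation*}
(A\nabla u,\nabla_{\rm NC}u_{\rm GCR})=(A\nabla_{\rm NC}\Pi_{\rm GCR}u,\nabla_{\rm NC}u_{\rm GCR})=\lam_{\rm GCR}(\Pi_{\rm GCR}u,u_{\rm GCR}),
\end{equation*}
so that $C=\lam_{\rm GCR}(\Pi_{\rm GCR}u,u_{\rm GCR})-\lam(u,u_{\rm GCR})=(\lam_{\rm GCR}-\lam)(u,u_{\rm GCR})-\lam_{\rm GCR}(u-\Pi_{\rm GCR}u,u_{\rm GCR})$. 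Next I would exploit that $u-\Pi_{\rm GCR}u$ has vanishing mean on every $K\in\cT$ by \eqref{inteGCR}: against the piecewise constant projection $\Pi_0u_{\rm GCR}$ this gives $(u-\Pi_{\rm GCR}u,u_{\rm GCR})=(u-\Pi_{\rm GCR}u,u_{\rm GCR}-\Pi_0u_{\rm GCR})$, whence by \eqref{interpolationestimate} and the elementwise Poincar\'e bound $\|u_{\rm GCR}-\Pi_0u_{\rm GCR}\|\lesssim h$ this inner product is $\lesssim h^{1+s}\cdot h=h^{2+s}$, which is of higher order than $h^{2s}$.

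Substituting into the identity and collecting the terms proportional to $\lam_{\rm GCR}-\lam$ yields
\begin{equation*}
(\lam_{\rm GCR}-\lam)\big(1-2(u,u_{\rm GCR})\big)=\big(\|\nabla_{\rm NC}e\|_A^2-\lam\|e\|^2\big)-2\lam_{\rm GCR}(u-\Pi_{\rm GCR}u,u_{\rm GCR}).
\end{equation*}
Since $1-2(u,u_{\rm GCR})=\|e\|^2-1\to-1$ (using $\|e\|\to0$ from \eqref{eqorder}), the left factor is negative for small $h$; the right–hand side is the dominant positive quantity $\gtrsim h^{2s}$ plus a term of order $h^{2+s}=o(h^{2s})$, hence positive for small $h$. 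Dividing, I conclude $\lam_{\rm GCR}-\lam<0$, which proves the statement for a matched (simple) eigenpair, the multiple case following by the analogous argument on the eigenspace via \eqref{disrceteRayleigh}. The main obstacle is exactly the control of $C$: the crude estimate $\|u-\Pi_{\rm GCR}u\|\lesssim h^{1+s}$ is no better than the target $h^{2s}$ when $s=1$, so it is essential to use the vanishing elementwise means in \eqref{inteGCR} (not merely the interpolation estimate) to gain the extra power of $h$; this, combined with the orthogonality \eqref{projectionGradu}, is what renders the sign of $\lam_{\rm GCR}-\lam$ determinate.
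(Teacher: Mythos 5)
Your proof is correct and is essentially the paper's own argument: the identity you derive via the consistency term $C$ (using the projection property \eqref{projectionGradu}, the discrete equation, and the elementwise zero means from \eqref{inteGCR}) is exactly the decomposition \eqref{decomposition} that the paper cites and then estimates, and both proofs conclude from the same bounds $O(h^{4s})+O(h^{2+s})=o(h^{2s})$ together with the saturation assumption. The only difference is cosmetic bookkeeping (you keep the factor $1-2(u,u_{\rm GCR})=\|e\|^2-1$ on the left instead of absorbing it into the decomposition), so no further comparison is needed.
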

\begin{proof}
Since $A$ is a piecewise constant matrix-valued function, $A=\bar{A}$, and $\bar{A}$ in \eqref{projectionGradu} can be replaced by $A$. A similar argument in \cite{Armentano,HuHuangLin2010,Zhang2007} proves
\begin{equation}\label{decomposition}
\begin{split}
  \lam-\lam_{\rm GCR}=&\|\nabla_{\rm NC}(u-u_{\rm GCR})\|_A^2-\lam_{\rm GCR}\|\Pi_{\rm GCR}u-u_{\rm GCR}\|^2\\
  &+\lam_{\rm GCR}(\|\Pi_{\rm GCR}u\|^2-\|u\|^2).
  \end{split}
\end{equation}
The triangle inequality, \eqref{eqorder} and \eqref{interpolationestimate} yield
$$
\lam_{\rm GCR}\|\Pi_{\rm GCR}u-u_{\rm GCR}\|^2\lesssim h^{4s}+h^{2+2s}\lesssim h^{4s}.
$$
It follows from the definition of the interpolation operator $\Pi_{\rm GCR}$, see \eqref{inteGCR}, that
\begin{equation*}
  \begin{split}
 \lam_{\rm GCR}\left( \|\Pi_{\rm GCR}u\|^2-\|u\|^2\right)&=\lam_{\rm GCR}(\Pi_{\rm GCR}u-u,\Pi_{\rm GCR}u+u)\\
  &=\lam_{\rm GCR}(\Pi_{\rm GCR}u-u,\Pi_{\rm GCR}u+u-\Pi_0(\Pi_{\rm GCR}u+u))\\
  &\lesssim h\|\Pi_{\rm GCR}u-u\|\|\nabla_{\rm NC}(\Pi_{\rm GCR}u+u)\|\\
  &\lesssim h^{2+s}.
  \end{split}
\end{equation*}
The above two estimates and the saturation condition  $h^{2s}\lesssim\|\nabla_{\rm NC}(u-u_{\rm GCR})\|_A^2$ imply that the second and third terms on the right-hand of \eqref{decomposition} are of higher order than the first term. This completes the proof.
\end{proof}
\begin{remark}
Hu et al. analyzed the saturation condition in \cite{HuHuangLin2010}. If the eigenfunctions $u\in H^{1+s}(\Omega)$ with $0<s<1$, it was proved that there exist meshes such that the saturation condition $h^{s}\lesssim\|\nabla_{\rm NC}(u-u_{\rm GCR})\|_A$ holds. In the following lemmas, we will prove the saturation condition $h\lesssim\|\nabla_{\rm NC}(u-u_{\rm GCR})\|_A$ provided that $u\in H^2(\Omega)$.

For simplicity, we prove it in two dimensions for the GCR element.
\end{remark}
\begin{lemma}
\label{sec3:lemma1}
Given $0\neq u\in H^1_0(\Omega)\cap H^2(\Omega)$, for any triangulation $\mathcal{T}$, there holds that
\begin{equation}\label{sec3:eq1}
\sum_{K\in\mathcal{T}}\left(\|\frac{\partial^2u}{\partial x_1^2}-\frac{b_{11}}{b_{22}}\frac{\partial^2u}{\partial x_2^2}\|^2_{L^2(K)}+\|\frac{\partial^2u}{\partial x_1\partial x_2}-\frac{b_{12}}{b_{11}}\frac{\partial^2u}{\partial x_1^2}\|^2_{L^2(K)}\right)>0.
\end{equation}
\end{lemma}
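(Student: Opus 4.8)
The plan is to argue by contradiction. Suppose the left-hand side of \eqref{sec3:eq1} vanishes. Since it is a sum of squared $L^2$-norms, every summand must vanish, so on each $K\in\mathcal{T}$ both $\frac{\partial^2 u}{\partial x_1^2}=\frac{b_{11}}{b_{22}}\frac{\partial^2 u}{\partial x_2^2}$ and $\frac{\partial^2 u}{\partial x_1\partial x_2}=\frac{b_{12}}{b_{11}}\frac{\partial^2 u}{\partial x_1^2}$ hold a.e. A direct check shows these two identities together are equivalent to the Hessian $D^2u|_K$ being proportional to $\bar{B}|_K$, i.e. $D^2u|_K=c\,\bar{B}|_K$ with scalar $c:=\frac{\partial^2 u}{\partial x_1^2}/b_{11}\in L^2(K)$. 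The goal is then to show this forces $u\equiv0$, contradicting $0\neq u$.

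First I would upgrade $c$ to a constant on each $K$. Because the third distributional derivatives of $u\in H^2$ commute, $\partial_2\partial_1\partial_1 u=\partial_1\partial_1\partial_2 u$ and $\partial_2\partial_1\partial_2 u=\partial_1\partial_2\partial_2 u$ in $\mathcal{D}'(K)$; substituting $\partial_i\partial_j u=b_{ij}\,c$ and using that $b_{ij}$ are constant on $K$ gives the system $b_{11}\partial_2 c=b_{12}\partial_1 c$ and $b_{12}\partial_2 c=b_{22}\partial_1 c$ for $\nabla c$, whose determinant is $b_{11}b_{22}-b_{12}^2=\det\bar{B}|_K>0$. Hence $\nabla c=0$, so $c$ is a constant $c_K$ on each connected $K$ and $u|_K\in P_2(K)$ is a genuine quadratic with constant Hessian $c_K\,\bar{B}|_K$.

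The crux is then a rigidity argument that propagates $c_K=0$ from the boundary inward. For a boundary element $K$ having an edge $E\subset\partial\Omega$, the condition $u\in H^1_0(\Omega)$ forces $u|_K=0$ on $E$; since $u|_K$ is quadratic and vanishes on a segment it is divisible by the affine function $\ell_E$ defining the line through $E$, so $u|_K=\ell_E\,m$ with $m$ affine. Writing $n_E=\nabla\ell_E$ and $p=\nabla m$, one gets $D^2u|_K=n_E\,p^{T}+p\,n_E^{T}$, and in two dimensions $\det\!\left(n_E\,p^{T}+p\,n_E^{T}\right)=-(n_E\times p)^2\le0$, whereas $\det\!\left(c_K\,\bar{B}|_K\right)=c_K^2\det\bar{B}|_K\ge0$; equating the two forces $c_K=0$. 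To propagate, I use that $\nabla u\in H^1(\Omega)$ is continuous across every interior edge $E$ with unit tangent $\tau_E$; differentiating the trace of $\nabla u$ along $E$ yields $[D^2u]\,\tau_E=0$, that is $c_{K^+}\bar{B}|_{K^+}\tau_E=c_{K^-}\bar{B}|_{K^-}\tau_E$. If $c_{K^-}=0$, then $c_{K^+}\bar{B}|_{K^+}\tau_E=0$, and since $\bar{B}|_{K^+}$ is invertible we have $\bar{B}|_{K^+}\tau_E\neq0$, whence $c_{K^+}=0$. As the element-adjacency graph of $\mathcal{T}$ is connected and every triangulation of $\Omega$ possesses a boundary element, induction gives $c_K=0$ for all $K$, so $D^2u\equiv0$. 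Together with the continuity of $u$ and $\nabla u$ across edges (from $u\in H^2$), this makes $\nabla u$ globally constant, hence $u$ affine, and $u|_{\partial\Omega}=0$ then gives $u\equiv0$, the desired contradiction.

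I expect the main obstacle to be this rigidity core: making precise that $H^2$-conformity reduces to the tangential-continuity identity $[D^2u]\tau_E=0$, and combining the determinant-sign computation at the boundary with the positive definiteness of $\bar{B}|_K$ to both start and sustain the propagation. Positive definiteness of $\bar{A}$ (hence of $\bar{B}$) is used at two decisive points — ruling out the boundary Hessian via $\det\bar{B}|_K>0$, and converting $c_{K^+}\bar{B}|_{K^+}\tau_E=0$ into $c_{K^+}=0$ — so the scheme would genuinely fail for an indefinite or singular $\bar{A}$, which is the feature I would be most careful to exploit correctly.
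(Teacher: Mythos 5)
Your proposal is correct, and it reaches the same intermediate waypoint as the paper --- that on each $K$ the two vanishing conditions force $u|_K$ to be a quadratic whose Hessian is a constant multiple of $\bar{B}|_K$ --- but by a genuinely different route in both halves. The paper gets the quadratic structure by treating $\partial^2u/\partial x_1^2-(b_{11}/b_{22})\partial^2u/\partial x_2^2=0$ as a wave equation, writing $u|_K=\phi(x_1-\sqrt{b_{22}/b_{11}}\,x_2)+\psi(x_1+\sqrt{b_{22}/b_{11}}\,x_2)$ by d'Alembert and then using the second condition to show $\phi''$ and $\psi''$ are constants; you instead read the two conditions directly as $D^2u|_K=c\,\bar{B}|_K$ and upgrade $c$ to a constant by commuting third distributional derivatives, with $\det\bar{B}|_K>0$ killing $\nabla c$. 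Your version is a bit cleaner at the distributional level and generalizes more transparently. For the endgame, the paper compresses everything into one sentence (``the homogeneous boundary condition and the continuity indicate that $u\in V_{\rm CR}\cap H^1_0(\Omega)\cap H^2(\Omega)$, hence $u\equiv0$''), relying on the observation that the quadratic form $b_{11}x_1^2+b_{22}x_2^2+2b_{12}x_1x_2$ is never affine on a line segment; your determinant-sign argument at boundary edges ($\det(n_Ep^T+pn_E^T)\le0$ versus $c_K^2\det\bar{B}|_K\ge0$) together with the tangential-continuity propagation $c_{K^+}\bar{B}|_{K^+}\tau_E=c_{K^-}\bar{B}|_{K^-}\tau_E$ across interior edges makes this rigidity step fully explicit, including the passage from boundary elements to the whole mesh via connectivity of the dual graph. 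What your approach buys is a complete, checkable proof of the step the paper only sketches; what the paper's buys is brevity and an explicit formula for $u|_K$. Both uses of positive definiteness that you flag (ruling out the boundary Hessian and inverting $\bar{B}|_{K^+}$ on $\tau_E$) are exactly where the hypothesis enters, consistent with the paper's reliance on $b_{ii}>0$ and $b_{11}b_{22}-b_{12}^2>0$.
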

\begin{proof}
If \eqref{sec3:eq1} would not hold, then, for any $K\in\mathcal{T}$, $\|\frac{\partial^2u}{\partial x_1^2}-\frac{b_{11}}{b_{22}}\frac{\partial^2u}{\partial x_2^2}\|_{L^2(K)}=0$. Since $\bar{B}|_K$ is positive-definite, we have $b_{ii}>0,i=1,2$.
Hence $u$ should be of the form
$$u|_K(x_1,x_2)=\phi(x_1-\sqrt{\frac{b_{22}}{b_{11}}}x_2)+\psi(x_1+\sqrt{\frac{b_{22}}{b_{11}}}x_2),
$$
where $\phi (\cdot)$ and $\psi(\cdot)$ are two univariate functions.
Since $\|\frac{\partial^2u}{\partial x_1\partial x_2}-\frac{b_{12}}{b_{11}}\frac{\partial^2u}{\partial x_1^2}\|_{L^2(K)}=0$, we have
$$
(\sqrt{b_{11}b_{22}}+b_{12})\phi^{''}(x_1-\sqrt{\frac{b_{22}}{b_{11}}}x_2)=(\sqrt{b_{11}b_{22}}-b_{12})\psi^{''}(x_1+\sqrt{\frac{b_{22}}{b_{11}}}x_2).
$$
This yields that $\phi^{''}=\frac{\sqrt{b_{11}b_{22}}-b_{12}}{\sqrt{b_{11}b_{22}}+b_{12}}\psi^{''}\equiv C$ for some constant $C$. It's straightforward to derive that \begin{equation*}
\begin{split}
u|_K=&c_0+c_1(x_1-\sqrt{\frac{b_{22}}{b_{11}}}x_2)+c_2(x_1-\sqrt{\frac{b_{22}}{b_{11}}}x_2)^2+c_3(x_1+\sqrt{\frac{b_{22}}{b_{11}}}x_2)
\\
&+\frac{\sqrt{b_{11}b_{22}}+b_{12}}{\sqrt{b_{11}b_{22}}-b_{12}}c_2(x_1+\sqrt{\frac{b_{22}}{b_{11}}}x_2)^2\\
=&c_0+c_1(x_1-\sqrt{\frac{b_{22}}{b_{11}}}x_2)+c_3(x_1+\sqrt{\frac{b_{22}}{b_{11}}}x_2)\\
&+\frac{c_2\sqrt{b_{22}}}{\sqrt{b_{11}}(\sqrt{b_{11}b_{22}}-b_{12})}(b_{11}x_1^2+b_{22}x_2^2+2b_{12}x_1x_2),
\end{split}
\end{equation*}
for some interpolation parameters $c_0,c_1,c_2,c_3$. Furthermore, since $b_{11}b_{22}-b_{12}^2>0$, $b_{11}x_1^2+b_{22}x_2^2+2b_{12}x_1x_2$ can't be a linear function on any one dimensional subsimplex of $K$. The homogenous boundary condition and the continuity indicate that $u\in V_{\rm CR}\cap H^1_0(\Omega)\cap H^2(\Omega)$. This implies $u\equiv0$, which contradicts with $u\neq 0$.
\end{proof}
\begin{remark}
When the domain is a rectangle, the saturation condition was analyzed in \cite{HuHuangLin2010}. The theory of \cite{LinXie} does not cover both the ECR and GCR elements, see Corollary 3.3 therein.
\end{remark}
In order to achieve the desired result, we shall use the operator defined in \cite{HuHuangLin2010}. Given any $K\in\mathcal{T}$, define $J_{2,K}v\in P_2(K)$ by
\begin{equation*}
  \int_K\nabla^pJ_{2,K}vdx= \int_K\nabla^pvdx,\quad p=0,1,2
\end{equation*}
for any $v\in H^2(K)$. Note that the operator $J_{2,K}$ is well-defined. Since $\int_K\nabla^p(v-J_{2,K}v)dx=0$ with $p=0,1,2$, there holds that
\begin{equation}\label{operatorSaturationestimate}
  \|\nabla^{p_1}(v-J_{2,K})v\|_{L^2(K)}\lesssim h_K^{p_2-p_1} \|\nabla^{p_2}(v-J_{2,K})v\|_{L^2(K)}\text{ for any }0\leq p_1\leq p_2\leq 2.
\end{equation}
  Finally, define the global operator $J_2$ by
  \begin{equation}\label{operatorSaturation}
    J_2|_K=J_{2,K}\quad\text{for any } K\in\mathcal{T}.
  \end{equation}
It follows from the definition of $J_{2,K}$ in \eqref{operatorSaturation} that
\begin{equation*}
 \nabla^2J_{2,K}v=\Pi_0 \nabla^2v.
\end{equation*}
Since piecewise constant functions are dense in the space $L^2(\Omega)$,
\begin{equation}\label{approx}
  \|\nabla^2_{\rm NC}(v-J_2v)\|\rightarrow 0\text{ when }h\rightarrow 0.
\end{equation}
\begin{lemma}
Suppose that $A$ is a piecewise constant matrix-valued function. Suppose that $u\in H^1_0(\Omega)\cap H^2(\Omega)$, there holds the following saturation condition:
\begin{equation*}
  h\lesssim\|\nabla_{\rm NC}(u-u_{\rm GCR})\|_A
\end{equation*}
\end{lemma}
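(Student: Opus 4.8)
The plan is to first replace the discrete eigenfunction $u_{\rm GCR}$ by the canonical interpolant $\Pi_{\rm GCR}u$, and then to bound the interpolation error from below. The key observation is that, since $A=\bar A$ in this section, the orthogonality \eqref{projectionGradu} reads $(A\nabla_{\rm NC}(u-\Pi_{\rm GCR}u),\nabla_{\rm NC}w)=0$ for all $w\in V_{\rm GCR}$; that is, $\Pi_{\rm GCR}u$ is exactly the $A$-orthogonal projection of $u$ onto $V_{\rm GCR}$ in the broken energy inner product. Taking $w=u_{\rm GCR}\in V_{\rm GCR}$ and invoking the best-approximation property of an orthogonal projection gives
\[
\|\nabla_{\rm NC}(u-\Pi_{\rm GCR}u)\|_A\le\|\nabla_{\rm NC}(u-u_{\rm GCR})\|_A .
\]
Hence it suffices to prove $h\lesssim\|\nabla_{\rm NC}(u-\Pi_{\rm GCR}u)\|_A$, i.e.\ that the interpolation error does not decay faster than $h$.

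Next I would localize. Since $\Pi_{\rm GCR}u|_K\in V_{\rm GCR}|_K=P_1(K)+\sspan\{\phi_K\}$, dropping the interelement constraints yields the elementwise lower bound $\|\nabla_{\rm NC}(u-\Pi_{\rm GCR}u)\|_A^2\ge\sum_{K\in\cT}\inf_{v_h\in V_{\rm GCR}|_K}\|\nabla(u-v_h)\|_{A,K}^2$, where $\|\cdot\|_{A,K}$ denotes the $A$-weighted gradient norm over $K$. The crucial structural fact is that any $\nabla v_h$ with $v_h\in V_{\rm GCR}|_K$ equals a constant vector plus a multiple of $\nabla\phi_K$, and by \eqref{bubblefunction} the Hessian $\nabla^2\phi_K$ is the \emph{constant} matrix proportional to $\bar B|_K$. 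Thus the local best approximation of $\nabla u$ can only reproduce the component of $\nabla^2u$ in the direction $\R\bar B|_K$, and the error is governed by the orthogonal component of $\nabla^2 u$ — precisely the quantity shown to be nonzero in Lemma \ref{sec3:lemma1}.

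To make this quantitative I would bring in the operator $J_2$. Writing $G_K:=\Pi_0\nabla^2u|_K$, the function $J_{2,K}u$ has constant Hessian $G_K$, and \eqref{operatorSaturationestimate}--\eqref{approx} show that replacing $u$ by $J_2u$ costs only $\sum_K\|\nabla(u-J_{2,K}u)\|_{A,K}^2\lesssim h^2\|\nabla^2_{\rm NC}(u-J_2u)\|^2=o(h^2)$, a genuinely higher-order term. For the piecewise quadratic $J_2u$ the local infimum is explicit: after choosing the free constant to annihilate the mean of $\nabla(J_{2,K}u-v_h)$ on $K$,
\[
\inf_{v_h\in V_{\rm GCR}|_K}\|\nabla(J_{2,K}u-v_h)\|_{A,K}^2=\inf_{c\in\R}\int_K\big(A(G_K-c\bar B)(x-{\rm mid}(K))\big)\cdot(G_K-c\bar B)(x-{\rm mid}(K))\dx .
\]
Introducing the second-moment matrix $C_K:=\int_K(x-{\rm mid}(K))(x-{\rm mid}(K))^T\dx$, shape regularity (so that $\lambda_{\min}(C_K)\gtrsim h_K^{\,n+2}$) together with the uniform positivity of $A$ bounds this infimum below by $c\,h_K^{\,n+2}\,\delta_K^2$, where $\delta_K^2:=\inf_{c\in\R}|G_K-c\bar B|^2$ is the squared distance from the averaged Hessian to the line $\R\bar B|_K$.

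Finally I would pass to the limit. Since $A$ is piecewise constant, $\bar B=A^{-1}$ is a fixed piecewise-constant field, so by stability of $\Pi_0$ and density of piecewise constants one has $\sum_K|K|\delta_K^2\to\int_\Omega\inf_{c}|\nabla^2u-c\,A^{-1}|^2\dx=:\Delta(u)^2$ as $h\to0$, and $\Delta(u)^2>0$ is exactly the content of Lemma \ref{sec3:lemma1}. Collecting the bounds and using mesh quasi-uniformity to convert $\sum_K h_K^{\,n+2}\delta_K^2$ into $h^2\sum_K|K|\delta_K^2$, I obtain $\|\nabla_{\rm NC}(u-\Pi_{\rm GCR}u)\|_A\gtrsim h\,\Delta(u)-o(h)$, which is $\gtrsim h$ once $h$ is small. \textbf{The hard part} is the lower-bound direction: one must ensure the local best approximation cannot collapse (handled by the constant-Hessian structure of $\phi_K$ and Lemma \ref{sec3:lemma1}), keep the $J_2$-correction strictly higher order, and — most delicately — control the mesh-dependent weights so that $\sum_K h_K^{\,n+2}\delta_K^2$ is comparable to $h^2$; this conversion is where shape/quasi-uniformity of $\cT$ and the strict positivity $\Delta(u)^2>0$ are both essential.
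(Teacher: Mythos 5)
Your proof is correct and rests on the same two pillars as the paper's: the operator $J_2$ with $\nabla^2 J_{2,K}u=\Pi_0\nabla^2u$ and \eqref{approx}, and Lemma \ref{sec3:lemma1}, whose content you correctly reinterpret as the statement that $\nabla^2u$ cannot lie a.e.\ on the line $\R\bar{B}$ (your $\Delta(u)^2>0$ is equivalent to \eqref{sec3:eq1} being positive). Where you genuinely diverge is in the mechanism of the central estimate. The paper works with the two second-order differential operators $\partial_{11}-\tfrac{b_{11}}{b_{22}}\partial_{22}$ and $\partial_{12}-\tfrac{b_{12}}{b_{11}}\partial_{11}$, which annihilate $V_{\rm GCR}$ elementwise; it bounds $\alpha$ from above by applying them to $(u-J_2u)+(J_2u-u_{\rm GCR})$ and converts the second piece to the $H^1$-seminorm via a piecewise inverse estimate. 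You instead first pass to the canonical interpolation error through the projection property \eqref{projectionGradu} (a clean reduction the paper does not make), then bound the elementwise best-approximation error of the piecewise quadratic $J_2u$ from below explicitly via the second-moment matrix $C_K$, obtaining the weight $h_K^{n+2}\delta_K^2$. The two routes are essentially dual: the paper's inverse estimate and your $\lambda_{\min}(C_K)\gtrsim h_K^{n+2}$ bound encode the same shape-regularity information, and both arguments need quasi-uniformity to convert $\sum_K h_K^{2}|K|\delta_K^2$ (respectively $\sum_Kh_K^{-2}\|\cdot\|_{L^2(K)}^2$) into a global $h^{2}$ (respectively $h^{-2}$) factor — an assumption the paper leaves implicit and you rightly flag. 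Your version is more quantitative and makes the geometric obstruction (distance of the averaged Hessian to $\R\bar B|_K$) explicit; the paper's is shorter because the annihilation identity lets it avoid computing any local infimum. One cosmetic caveat: Lemma \ref{sec3:lemma1} is stated and proved only in two dimensions, so your $n$-dimensional phrasing inherits that restriction, exactly as the paper's proof does.
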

\begin{proof}
Since $A$ is piecewise constant, when $h$ is small enough, for any $K\in\mathcal{T}$, $A|_K$ is constant. According to Lemma \ref{sec3:lemma1},
there exists constant $\alpha>0$ such that
\begin{equation*}
\alpha<\sum_{K\in\mathcal{T}}\left(\|\frac{\partial^2u}{\partial x_1^2}-\frac{b_{11}}{b_{22}}\frac{\partial^2u}{\partial x_2^2}\|^2_{L^2(K)}+\|\frac{\partial^2u}{\partial x_1\partial x_2}-\frac{b_{12}}{b_{11}}\frac{\partial^2u}{\partial x_1^2}\|^2_{L^2(K)}\right).
\end{equation*}
The fact that $u\in V_{\rm GCR}$ plus \eqref{twoDex} and \eqref{GCRelement} yield that
\begin{equation*}
  \sum_{K\in\mathcal{T}}\left(\|\frac{\partial^2u_{\rm GCR}}{\partial x_1^2}-\frac{b_{11}}{b_{22}}\frac{\partial^2u_{\rm GCR}}{\partial x_2^2}\|^2_{L^2(K)}+\|\frac{\partial^2u_{\rm GCR}}{\partial x_1\partial x_2}-\frac{b_{12}}{b_{11}}\frac{\partial^2u_{\rm GCR}}{\partial x_1^2}\|^2_{L^2(K)}\right)=0.
\end{equation*}
Let $J_2$ be defined as in \eqref{operatorSaturation}. It follows from the triangle inequality and the piecewise inverse estimate that
\begin{equation*}
  \begin{split}
 \alpha<\sum_{K\in\mathcal{T}}&\left(\|\frac{\partial^2(u-u_{\rm GCR})}{\partial x_1^2}-\frac{b_{11}}{b_{22}}\frac{\partial^2(u-u_{\rm GCR})}{\partial x_2^2}\|^2_{L^2(K)}\right.\\
 &\left.+\|\frac{\partial^2(u-u_{\rm GCR})}{\partial x_1\partial x_2}-\frac{b_{12}}{b_{11}}\frac{\partial^2(u-u_{\rm GCR})}{\partial x_1^2}\|^2_{L^2(K)}\right)\\
  \leq2\sum_{K\in\mathcal{T}}&\left(\|\frac{\partial^2(u-J_2u)}{\partial x_1^2}-\frac{b_{11}}{b_{22}}\frac{\partial^2(u-J_2u)}{\partial x_2^2}\|^2_{L^2(K)}\right.\\
  &+\|\frac{\partial^2(u-J_2u)}{\partial x_1\partial x_2}-\frac{b_{12}}{b_{11}}\frac{\partial^2(u-J_2u)}{\partial x_1^2}\|^2_{L^2(K)}\\
  &+\|\frac{\partial^2(J_2u-u_{\rm GCR})}{\partial x_1^2}-\frac{b_{11}}{b_{22}}\frac{\partial^2(J_2u-u_{\rm GCR})}{\partial x_2^2}\|^2_{L^2(K)}\\
 &\left.+\|\frac{\partial^2(J_2u-u_{\rm GCR})}{\partial x_1\partial x_2}-\frac{b_{12}}{b_{11}}\frac{\partial^2(J_2u-u_{\rm GCR})}{\partial x_1^2}\|_{L^2(K)}^2\right)\\
  \lesssim\|\nabla^2_{\rm NC}&(u-J_2u)\|^2+h^{-2}\|\nabla_{\rm NC}(J_2u-u_{\rm GCR})\|^2.
  \end{split}
\end{equation*}
The estimate of \eqref{operatorSaturationestimate} and the triangle inequality lead to
\begin{equation*}
   1\lesssim\|\nabla^2_{\rm NC}(u-J_2u)\|^2+h^{-2}\|\nabla_{\rm NC}(u-u_{\rm GCR})\|^2.
\end{equation*}
Finally it follows from \eqref{approx} that
\begin{equation*}
  h^2\lesssim\|\nabla_{\rm NC}(u-u_{\rm GCR})\|^2
\end{equation*}
when the meshsize is small enough, which completes the proof.
\end{proof}
\section{Guaranteed lower bounds for eigenvalues}
In practice, it is not easy to check whether the meshsize $h$ is small enough in Theorem \ref{asymptoticLowerbound}. In this section, we propose a new method to provide  guaranteed lower bounds for eigenvalues. We follow the idea of \cite{LiuOishi2013} and \cite{CarstensenGallistl2013,CarstensenGedickel2013} and generalize it to general second order elliptic operators. We first present some constants about the matrix-valued function $A$, which might be depend on $h$. For any $v\in H^1_0(\Omega)+V_{\rm GCR}$, there exist $C_A$, $C_{\bar{A}}$, $C_{\bar{A},A}$ and $C_\infty$ such that
\begin{equation}\label{constant1}
\|\nabla_{\rm NC}v\|\leq C_A\|\nabla_{\rm NC}v\|_A,
\end{equation}
\begin{equation}\label{constant2}
\|\nabla_{\rm NC}v\|\leq C_{\bar{A}}\|\nabla_{\rm NC}v\|_{\bar{A}},
\end{equation}
\begin{equation}\label{constant3}
\|\nabla_{\rm NC}v\|_{\bar{A}}\leq C_{\bar{A},A}\|\nabla_{\rm NC}v\|_A,
\end{equation}
\begin{equation}\label{constant4}
  \|(A-\bar{A})\nabla_{\rm NC}v\|\leq C_\infty h\|\nabla_{\rm NC}v\|.
\end{equation}
Define $\eta_1:=C_{\bar{A}}C_{\bar{A},A}$ and $\eta_2:=C_\infty C_{\bar{A}}C_AC_{\bar{A},A}$.

The following Poincar$\acute{\rm e}$ inequality can be found in \cite{ChavelFeldman1977} .
\begin{lemma}
\label{Chavelpoincare}
Given $K\in\mathcal{T}$, let $w\in H^1(K)$ be a function with vanishing mean. Then
\begin{equation*}
  \|w\|_{L^2(K)}\leq\frac{h_K}{\pi}\|\nabla w\|_{L^2(K)}.
\end{equation*}
\end{lemma}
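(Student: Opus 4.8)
The plan is to establish the optimal Payne--Weinberger constant $h_K/\pi$ by reducing the $n$-dimensional mean-zero estimate on the convex set $K$ (a simplex is convex, and $\diam(K)=h_K$) to a family of one-dimensional weighted Poincaré inequalities via a bisection/slicing argument. The base case is scalar: on an interval $(0,L)$ the lowest nonzero Neumann eigenvalue of $-d^2/dt^2$ is $(\pi/L)^2$, attained by $\cos(\pi t/L)$, so every $f\in H^1(0,L)$ with $\int_0^L f\,dt=0$ satisfies $\|f\|_{L^2(0,L)}\le (L/\pi)\|f'\|_{L^2(0,L)}$, and any chord of $K$ has length $L\le h_K$. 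The strategy is to transport this sharp $1/\pi$ factor from the axis of $K$ to all of $K$.

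First I would set up the decomposition. Given the mean-zero function $w$ on $K$, I would repeatedly cut $K$ by hyperplanes, at each step choosing the cutting plane so that $w$ continues to have vanishing mean on each of the two resulting convex pieces; the existence of such a balancing cut follows from a continuity argument (as a family of hyperplanes is swept across $K$, the partial integral of $w$ varies continuously and can be forced to vanish on one side). By organizing these cuts so that the longest remaining transverse extent is repeatedly halved, after finitely many steps one obtains a decomposition $K=\bigcup_i K_i$ into convex cells with $\int_{K_i}w\,dx=0$, with each $K_i$ trapped in an arbitrarily thin slab around a segment of length $L_i\le h_K$. Since $L^2$ norms add over a disjoint decomposition, it suffices to prove $\|w\|_{L^2(K_i)}\le (h_K/\pi)\|\nabla w\|_{L^2(K_i)}$ on each thin cell and then sum.

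Next I would carry out the reduction on a single thin cell. Orienting its long axis as the $t$-direction and writing the cell as a thin tube over $(0,L_i)$, I would refine toward the degenerate limit in which the cell collapses to the segment carrying the weight $\rho(t)$ equal to the $(n-1)$-dimensional cross-sectional measure; the transverse contributions to $\|\nabla w\|_{L^2(K_i)}$ and the error from replacing $w$ by its cross-sectional average are higher order and vanish under refinement. Convexity gives, via Brunn--Minkowski, that $\rho^{1/(n-1)}$ is concave, hence $\rho$ is log-concave, and the claim reduces to the weighted one-dimensional inequality $\int_0^{L_i}|f|^2\rho\,dt\le (L_i/\pi)^2\int_0^{L_i}|f'|^2\rho\,dt$ for $f$ with $\int f\rho\,dt=0$. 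I would prove this by studying the weighted Neumann eigenvalue problem $-(\rho f')'=\mu\rho f$ and showing that its first nonzero eigenvalue is at least $(\pi/L_i)^2$.

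The main obstacle is precisely this last bound: controlling the lowest nonzero eigenvalue of the weighted problem uniformly over all admissible weights $\rho$. This is exactly the point at which the original Payne--Weinberger reasoning was incomplete, and making it rigorous requires a genuine comparison argument rather than a naive passage to the constant-weight interval; one must exploit the concavity of $\rho^{1/(n-1)}$ to show that the first nonconstant eigenfunction cannot oscillate faster than the cosine, so the sharp constant survives. Once the weighted $1$D estimate is secured and the thin-cell limit is justified by dominated convergence, summing over the cells $K_i$ and using $L_i\le h_K$ yields $\|w\|_{L^2(K)}^2\le (h_K/\pi)^2\|\nabla w\|_{L^2(K)}^2$, which is the assertion.
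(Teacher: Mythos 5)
For this lemma the paper offers no proof at all: it simply invokes the optimal Poincar\'e inequality for convex domains and cites \cite{ChavelFeldman1977} (the result goes back to Payne and Weinberger, 1960, with the constant $\diam(K)/\pi$). Your proposal, by contrast, attempts to reconstruct that classical proof from scratch, and the strategy you describe --- balanced hyperplane cuts producing mean-zero convex cells that are thin in all but one direction, collapse of each cell onto a weighted segment whose weight $\rho$ satisfies concavity of $\rho^{1/(n-1)}$ by Brunn--Minkowski, and a sharp weighted one-dimensional Neumann eigenvalue bound --- is exactly the right one, and you correctly identify that the weighted eigenvalue estimate is the crux (this is the step completed rigorously by Bebendorf's 2003 note on the Poincar\'e inequality for convex domains).

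That said, as written the proposal is an outline rather than a proof, because the two genuinely hard steps are asserted but not carried out. First, the existence of a balancing cut does not follow from translating a family of parallel hyperplanes across $K$: the one-sided integral $t\mapsto\int_{K\cap\{x\cdot e<t\}}w\,dx$ vanishes at both ends of its range, so the intermediate value theorem gives no nontrivial interior zero; the standard argument instead rotates a hyperplane about an interior point (or an $(n-2)$-dimensional axis) through an angle $\pi$, which swaps the two half-cells and forces a sign change of the one-sided integral, and one must additionally arrange the recursion so that the cells become uniformly thin while staying convex. Second, the inequality $\int_0^{L}|f|^2\rho\,dt\le(L/\pi)^2\int_0^{L}|f'|^2\rho\,dt$ for $\rho$-mean-zero $f$ and for all weights with $\rho^{1/(n-1)}$ concave is the entire analytic content of the theorem; saying that ``the first nonconstant eigenfunction cannot oscillate faster than the cosine'' names the conclusion, not the argument (the known proofs proceed by reducing to a two-parameter family of extremal weights or by a direct variational comparison). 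Given that the paper treats this as a citable black box, the efficient course is to do the same; if you do want a self-contained proof, these two lemmas must be proved, not described.
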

\begin{remark}
\label{poincarebetter}
Let $j_{1,1}= 3.8317059702$ be the first positive root of the Bessel function of the first kind. In two dimensions, the following improved Poincar$\acute{\rm e}$ inequality holds from \cite{LaugesenSiudeja},
\begin{equation*}
  \|w\|_{L^2(K)}\leq\frac{h_K}{j_{1,1}}\|\nabla w\|_{L^2(K)}.
\end{equation*}
\end{remark}
Thanks to the second equation of \eqref{inteGCR}, for any $v\in H^1(K)$, there holds that
\begin{equation}\label{Poincare}
  \|v-\Pi_{\rm GCR}v\|_{L^2(K)}\leq \frac{h_K}{\pi}\|\nabla(v-\Pi_{\rm GCR}v)\|_{L^2(K)}.
\end{equation}
\begin{theorem}
\label{guranteelowerbound} Let $\lambda_\ell$ and $\lambda_{\ell,\rm GCR}$ be the $\ell-$th eigenvalues of \eqref{eigen} and \eqref{discreteeigentotal}, respectively. The meshsize of the triangulation is chosen to
be sufficiently small such that
\begin{equation}
\label{hconstrain}
h<\frac{\pi}{\eta_1\sqrt{\lam_\ell}}.
\end{equation}Then, there holds that, for any $0<\beta<1$
\begin{equation}
\label{lowerbound}
 \frac{\lambda_{\ell,\rm GCR}}{1+\frac{\lambda_{\ell,\rm GCR}^2C_A^4h^4}{4\pi^2(\beta\pi^2+\lambda_{\ell,\rm GCR}C_A^2h^2)}+\frac{\eta_2^2h^2}{1-\beta}}\leq \lambda_\ell.
\end{equation}
\end{theorem}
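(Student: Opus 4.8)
The plan is to combine the discrete Rayleigh--Ritz principle \eqref{disrceteRayleigh} with the canonical interpolation $\Pi_{\rm GCR}$ applied to the span $E_\ell$ of the first $\ell$ exact eigenfunctions, and then to estimate the resulting discrete Rayleigh quotient against the exact one. The first step is to check that $\Pi_{\rm GCR}$ is injective on $E_\ell$, so that $\Pi_{\rm GCR}(E_\ell)$ is an admissible $\ell$-dimensional trial space in \eqref{disrceteRayleigh}. For $0\neq w\in E_\ell$, the Pythagorean identity \eqref{normortho} gives $\|\nabla_{\rm NC}(w-\Pi_{\rm GCR}w)\|_{\bar A}\le\|\nabla w\|_{\bar A}$; converting norms through \eqref{constant2} and \eqref{constant3} yields $\|\nabla_{\rm NC}(w-\Pi_{\rm GCR}w)\|\le\eta_1\|\nabla w\|_A$, whence \eqref{Poincare} (summed over elements) together with $\|\nabla w\|_A^2\le\lambda_\ell\|w\|^2$ (the continuous principle \eqref{minmax} restricted to $E_\ell$) gives $\|w-\Pi_{\rm GCR}w\|\le(\eta_1\sqrt{\lambda_\ell}\,h/\pi)\|w\|$. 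The meshsize condition \eqref{hconstrain} forces the right-hand side to be strictly less than $\|w\|$, so $\Pi_{\rm GCR}w\neq0$ and injectivity follows. Consequently $\lambda_{\ell,\rm GCR}\le\max_{0\neq w\in E_\ell}\|\nabla_{\rm NC}\Pi_{\rm GCR}w\|_A^2/\|\Pi_{\rm GCR}w\|^2$; I fix a maximizer $w$ and write $v=\Pi_{\rm GCR}w$, $e=w-v$, so that $\lambda_{\ell,\rm GCR}\|v\|^2\le\|\nabla_{\rm NC}v\|_A^2$.

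For the numerator I would exploit the orthogonality \eqref{projectionGradu}. Expanding $v=w-e$ and substituting $(\bar A\nabla_{\rm NC}e,\nabla w)=\|\nabla_{\rm NC}e\|_{\bar A}^2$, which is \eqref{projectionGradu} tested against $v\in V_{\rm GCR}$, produces the exact identity
\[
\|\nabla_{\rm NC}v\|_A^2=\|\nabla w\|_A^2+\|\nabla_{\rm NC}e\|_A^2-2\|\nabla_{\rm NC}e\|_{\bar A}^2-2((A-\bar A)\nabla w,\nabla_{\rm NC}e).
\]
The decisive feature is the negative term $-2\|\nabla_{\rm NC}e\|_{\bar A}^2$ generated by the projection, which absorbs the cross term: bounding $((A-\bar A)\nabla w,\nabla_{\rm NC}e)$ via \eqref{constant4}, \eqref{constant1}, \eqref{constant2} and \eqref{constant3} (so that the factor $\eta_2=C_\infty C_{\bar A}C_AC_{\bar A,A}$ emerges) and applying Young's inequality with the parameter $\beta$ leaves $\|\nabla_{\rm NC}v\|_A^2\le(1+\eta_2^2h^2/(1-\beta))\|\nabla w\|_A^2$, together with a \emph{surplus} nonpositive remainder proportional to $-\beta\|\nabla_{\rm NC}e\|_{\bar A}^2$. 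This estimate is the source of the term $\eta_2^2h^2/(1-\beta)$ in the denominator of \eqref{lowerbound}, and the surplus is what will later control the denominator correction.

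For the denominator I would use that $\Pi_{\rm GCR}$ preserves element means, so by \eqref{inteGCR} the error $e$ has vanishing mean on every $K$ and $\Pi_0e=0$. A Pythagorean splitting against $\Pi_0$ then gives $\|w\|^2-\|v\|^2=2(v-\Pi_0v,e)+\|e\|^2$, and applying \eqref{Poincare} to both factors bounds this by a multiple of $h^2\|\nabla_{\rm NC}v\|\,\|\nabla_{\rm NC}e\|$ and $h^2\|\nabla_{\rm NC}e\|^2$. Controlling $\|\nabla_{\rm NC}v\|$ through \eqref{constant1} by $\|\nabla_{\rm NC}v\|_A$ brings in the factor $C_A$, and the remaining cross term in $\|\nabla_{\rm NC}v\|_A\,\|\nabla_{\rm NC}e\|$ is handled by a second Young's inequality whose negative budget is exactly the surplus $-\beta\|\nabla_{\rm NC}e\|_{\bar A}^2$ left over from the numerator step. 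The discrete relation $\lambda_{\ell,\rm GCR}\|v\|^2\le\|\nabla_{\rm NC}v\|_A^2$ then turns the residual $\|\nabla_{\rm NC}v\|_A^2$-contribution into a $\lambda_{\ell,\rm GCR}$-dependent term, and optimising the free weight (equivalently, completing the square, with $\beta\pi^2+\lambda_{\ell,\rm GCR}C_A^2h^2$ appearing as the effective coefficient) produces precisely $\lambda_{\ell,\rm GCR}^2C_A^4h^4/(4\pi^2(\beta\pi^2+\lambda_{\ell,\rm GCR}C_A^2h^2))$.

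Finally I would assemble $\lambda_{\ell,\rm GCR}\|v\|^2\le\|\nabla_{\rm NC}v\|_A^2$, the numerator bound, the inequality $\|\nabla w\|_A^2\le\lambda_\ell\|w\|^2$, and the denominator bound into a single self-referential inequality relating $\lambda_{\ell,\rm GCR}\|v\|^2$ and $\lambda_\ell\|v\|^2$, and solve it for $\lambda_{\ell,\rm GCR}/\lambda_\ell$ to reach \eqref{lowerbound}. I expect the principal obstacle to be the interplay between the two bilinear forms: the projection orthogonality \eqref{normortho}--\eqref{projectionGradu} holds only for $\bar A$, whereas every Rayleigh quotient is taken in $A$, so each estimate must be routed through the comparison constants \eqref{constant1}--\eqref{constant4} without degrading the leading order. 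Coordinating this with the two coupled Young's inequalities that share the single parameter $\beta$, and closing the circular dependence on $\lambda_{\ell,\rm GCR}$ in the denominator correction, is the delicate part of the argument; the meshsize condition \eqref{hconstrain} is needed both for injectivity and to keep the various surplus terms of the right sign.
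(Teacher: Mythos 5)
Your plan reproduces the paper's argument almost step for step: the $\ell$-dimensionality of $\Pi_{\rm GCR}E_\ell$ under \eqref{hconstrain} via \eqref{normortho}, \eqref{constant2}--\eqref{constant3} and \eqref{Poincare}; the discrete Rayleigh--Ritz principle on the trial space $\Pi_{\rm GCR}E_\ell$; the orthogonality \eqref{projectionGradu} to kill the $\bar A$-part of the cross term; the identity $(v-\Pi_{\rm GCR}v,v)=(v-\Pi_{\rm GCR}v,v-\Pi_0v)$ for the $L^2$-denominator; and two Young inequalities whose parameters are tuned so that the unwanted coefficients vanish. The only structural difference is that you rearrange the Pythagoras-type identity so as to bound the discrete Rayleigh quotient's numerator from above and its denominator from below separately, whereas the paper bounds the three terms of $\|\nabla w\|_A^2=\|\nabla_{\rm NC}e\|_A^2+\|\nabla_{\rm NC}\Pi_{\rm GCR}w\|_A^2+2(A\nabla_{\rm NC}e,\nabla_{\rm NC}\Pi_{\rm GCR}w)$ from below simultaneously.

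That rearrangement, however, creates a concrete problem in your numerator step. Your identity
\begin{equation*}
\|\nabla_{\rm NC}v\|_A^2=\|\nabla w\|_A^2+\|\nabla_{\rm NC}e\|_A^2-2\|\nabla_{\rm NC}e\|_{\bar A}^2-2((A-\bar A)\nabla w,\nabla_{\rm NC}e)
\end{equation*}
is correct, but to reach the claimed bound $\|\nabla_{\rm NC}v\|_A^2\le\bigl(1+\eta_2^2h^2/(1-\beta)\bigr)\|\nabla w\|_A^2-\beta\|\nabla_{\rm NC}e\|_{\bar A}^2$ after spending $(1-\beta)\|\nabla_{\rm NC}e\|_{\bar A}^2$ in the Young inequality, you need $\|\nabla_{\rm NC}e\|_A^2\le\|\nabla_{\rm NC}e\|_{\bar A}^2$. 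This is not among \eqref{constant1}--\eqref{constant4} and is false in general, since $A$ need not be dominated pointwise by its piecewise-constant approximation $\bar A$; the best available is $\|\nabla_{\rm NC}e\|_A^2\le(1+C_\infty C_{\bar A}^2h)\|\nabla_{\rm NC}e\|_{\bar A}^2$, which shrinks your surplus to $(1-\beta-C_\infty C_{\bar A}^2h)\|\nabla_{\rm NC}e\|_{\bar A}^2$ and, because that surplus lives in the $\bar A$-energy (so the Poincar\'e conversion to $\|e\|^2$ brings in $C_{\bar A}$ rather than $C_A$), would not reproduce the exact constants of \eqref{lowerbound}. The paper avoids the issue entirely: in its arrangement $\|\nabla_{\rm NC}e\|_A^2$ enters with a plus sign on the side being bounded from below, so it is itself the Young budget in the correct norm, and the $\bar A$-cross term vanishes exactly rather than being converted into $-2\|\nabla_{\rm NC}e\|_{\bar A}^2$. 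The fix is to revert to that arrangement (or to accept an $O(h)$-perturbed, slightly weaker constant); the remainder of your outline then goes through.
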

\begin{proof}$E_\ell$ is defined in \eqref{ell}. For any $v=\sum^\ell_{k=1}c_iu_i\in E_\ell$, $\|v\|=1$. It's immediate to see that $\|\nabla v\|_A\leq\sqrt{\lam_\ell}$. Therefore, \eqref{Poincare}, the constant in \eqref{constant2}, the  property \eqref{normortho} for the interpolation operator,  and \eqref{constant3} imply that
\begin{equation}
\begin{split}
\|v-\Pi_{\rm GCR}v\|&\leq\frac{h}{\pi}\|\nabla_{\rm NC}(v-\Pi_{\rm GCR}v)\|\leq\frac{C_{\bar{A}}h}{\pi}\|\nabla_{\rm NC}(v-\Pi_{\rm GCR}v)\|_{\bar{A}}\\
&\leq\frac{C_{\bar{A}}h}{\pi}\|\nabla v\|_{\bar{A}}\leq\frac{C_{\bar{A}}C_{\bar{A},A}h}{\pi}\|\nabla v\|_A\leq\frac{\eta_1h}{\pi}\sqrt{\lam_\ell},
\end{split}
\end{equation}
where $\eta_1=C_{\bar{A}}C_{\bar{A},A}$. Due to the assumption $h<\frac{\pi}{\eta_1\sqrt{\lam_\ell}}$, there holds that
\begin{equation*}
  \|\Pi_{\rm GCR}v\|\geq1-\|v-\Pi_{\rm GCR}v\|\geq1-\frac{\eta_1h}{\pi}\sqrt{\lam_\ell}>0.
\end{equation*}
As $E_\ell$ is a $\ell-$dimensional space, $\Pi_{\rm GCR}E_\ell$ is also a $\ell-$dimensional space.

There exist real coefficients $\xi_1,\cdots,\xi_\ell$ with $\sum^\ell_{k=1}\xi^2_k=1$ such
that the maximiser of the Rayleigh quotient \eqref{disrceteRayleigh} in $\sspan\{\Pi_{\rm GCR}u_1,\cdots,\Pi_{\rm GCR}u_\ell\}$ is equal to $\sum^\ell_{k=1}\xi_k\Pi_{\rm GCR}u_k$. Therefore $v:=\sum^\ell_{k=1}\xi_ku_k$ satisfies
\begin{equation}\label{eqRQ}
  \lam_{\ell,\rm GCR}\leq\frac{\|\nabla_{\rm NC}\Pi_{\rm GCR}v\|^2_A}{\|\Pi_{\rm GCR}v\|^2}.
\end{equation}
An elementary manipulation yields the following decomposition
\begin{equation}\label{equ1}
\begin{split}
\|\nabla v\|_A^2&=\|\nabla_{\rm NC}(v-\Pi_{\rm GCR}v)\|_A^2+\|\nabla_{\rm NC}\Pi_{\rm GCR}v\|_A^2\\
&+2(A(\nabla_{\rm NC}(v-\Pi_{\rm GCR}v),\nabla_{\rm NC}\Pi_{\rm GCR}v).
\end{split}
\end{equation}
For the first term of \eqref{equ1}, it follows from \eqref{constant1} and \eqref{Poincare} that
\begin{equation}\label{term1}
 \|\nabla_{\rm NC}(v-\Pi_{\rm GCR}v)\|_A^2\geq\frac{\pi^2}{C_A^2h^2}\|v-\Pi_{\rm GCR}v\|^2.
\end{equation}
The second term of \eqref{equ1} can be analyzed by \eqref{eqRQ} as
\begin{equation}\label{term2}
\begin{split}
\|\nabla_{\rm NC}\Pi_{\rm GCR}v\|_A^2&\geq\lam_{\ell,\rm GCR}\|\Pi_{\rm GCR}v\|^2\\
&=\lambda_{\ell,\rm GCR}(\|v-\Pi_{\rm GCR}v\|^2+\|v\|^2-2(v-\Pi_{\rm GCR}v,v)).
\end{split}
\end{equation}
By the second equation of \eqref{inteGCR}, we have
 $$(v-\Pi_{\rm GCR}v,v)=(v-\Pi_{\rm GCR}v,v-\Pi_0v).$$
Since $\int_K\Pi_0vdx=\int_Kvdx$, the same estimate of \eqref{Poincare} holds true for $\Pi_0$. \eqref{constant1} and the Young inequality reveal for any $\delta_1>0$ that
\begin{equation*}
\begin{split}
  (v-\Pi_{\rm GCR}v,v-\Pi_0v)&\leq\|v-\Pi_{\rm GCR}v\|\|v-\Pi_0v\|\leq\frac{h}{\pi}\|v-\Pi_{\rm GCR}v\|\|\nabla v\|\\
  &\leq\frac{C_Ah}{\pi}\|v-\Pi_{\rm GCR}v\|\|\nabla v\|_A\\
  &\leq\frac{C_A^2h^2}{2\pi^2}\delta_1\|v-\Pi_{\rm GCR}v\|^2+\frac{1}{2\delta_1}\|\nabla v\|_A^2.
  \end{split}
\end{equation*}
The third term of \eqref{equ1} has the following decomposition:
\begin{equation}\label{term3}
  \begin{split}
  2(A(\nabla_{\rm NC}(v-\Pi_{\rm GCR}v),\nabla_{\rm NC}&\Pi_{\rm GCR}v)=2(\bar{A}(\nabla_{\rm NC}(v-\Pi_{\rm GCR}v),\nabla_{\rm NC}\Pi_{\rm GCR}v)\\
  &+2((A-\bar{A})\nabla_{\rm NC}(v-\Pi_{\rm GCR}v),\nabla_{\rm NC}\Pi_{\rm GCR}v).
  \end{split}
\end{equation}
Thanks to \eqref{projectionGradu}, the first term in the above equation equals to zero. It remains to estimate the second term, which can be estimated by \eqref{constant1}--\eqref{constant4}, \eqref{normortho} and the Young inequality that
\begin{equation*}
\begin{split}
&2((A-\bar{A})\nabla_{\rm NC}(v-\Pi_{\rm GCR}v),\nabla_{\rm NC}\Pi_{\rm GCR}v)\\
&\leq2C_{\bar{A}}\|(A-\bar{A})\nabla_{\rm NC}(v-\Pi_{\rm GCR}v)\|\|\nabla_{\rm NC}\Pi_{\rm GCR}v\|_{\bar{A}}\\
&\leq2C_{\bar{A}}C_\infty h\|\nabla_{\rm NC}(v-\Pi_{\rm GCR}v)\|\|\nabla v\|_{\bar{A}}\\
&\leq2\eta_2h\|\nabla_{\rm NC}(v-\Pi_{\rm GCR}v)\|_A\|\nabla v\|_A\\
&\leq\delta_2\|\nabla_{\rm NC}(v-\Pi_{\rm GCR}v)\|_A^2+\frac{\eta_2^2h^2}{\delta_2}\|\nabla v\|_A^2,
 \end{split}
\end{equation*}
where $\eta_2=C_\infty C_{\bar{A}}C_AC_{\bar{A},A}$ and $\delta_2>0$ is arbitrary. By substituting \eqref{term1}--\eqref{term3} into \eqref{equ1}, we obtain, for any $0<\beta<1$, that
\begin{equation*}
\begin{split}
\lam_\ell&\geq\|\nabla v\|^2_A\geq\left(\beta\frac{\pi^2}{C_A^2h^2}+\lambda_{\ell,\rm GCR}-\lambda_{\ell,\rm GCR}\frac{C_A^2h^2\delta_1}{2\pi^2}\right)\|v-\Pi_{\rm GCR}v\|^2\\
&+(1-\beta-\delta_2)\|\nabla_{\rm NC}(v-\Pi_{\rm GCR}v)\|_A^2-\left(\frac{\lambda_{\ell,\rm GCR}}{2\delta_1}+\frac{\eta_2^2h^2}{\delta_2}\right)\|\nabla v\|_A^2+\lambda_{\ell,\rm GCR}\|v\|^2.
\end{split}
\end{equation*}
Let $\delta_1=\frac{2\pi^2(\beta\pi^2+\lambda_{\ell,\rm GCR}C^2_Ah^2)}{\lambda_{\ell,\rm GCR}C^4_Ah^4}$, $\delta_2=1-\beta$. This yields that
\begin{equation*}
 \begin{split}
 0&\leq\|\triangledown v\|^2_A(1+\frac{\lambda_{\ell,\rm GCR}}{2\delta_1}+\frac{\eta_2^2h^2}{\delta_2})-\lambda_{\ell,\rm GCR}\|v\|^2\\
 &\leq\lam_\ell(1+\frac{\lambda_{\ell,\rm GCR}}{2\delta_1}+\frac{\eta_2^2h^2}{\delta_2})-\lambda_{\ell,\rm GCR}.
\end{split}
\end{equation*}
This completes the proof.
\end{proof}
\begin{remark}
When $A$ is a piecewise constant matrix-valued function, \eqref{lowerbound} yields that
\begin{equation}
 \frac{\lambda_{\ell,\rm GCR}}{1+\frac{\lambda_{\ell,\rm GCR}^2C_A^4h^4}{4\pi^2(\pi^2+\lambda_{\ell,\rm GCR}C_A^2h^2)}}\leq \lambda_\ell.
\end{equation}
Due to Remark \ref{poincarebetter}, $\pi$ can be replaced by $j_{1,1}$ in two dimensions. For the Laplace operator in two dimensions considered  in \cite{CarstensenGedickel2013}, as we shall find in Section 7, the guaranteed lower bounds of this paper are more accurate than those \cite{CarstensenGedickel2013}, see \eqref{CRlowerbound}
 below.  In addition, the meshsize condition
 \eqref{hconstrain} for this case becomes
 $$
 h<\frac{j_{1,1}}{\sqrt{\lam_\ell}}
 $$
 which improves largely that used in \cite{CarstensenGedickel2013} which reads
 $$
 h<\frac{\sqrt{1+1/\ell}-1}{\kappa\sqrt{\lam_\ell}}\text{ with } \kappa=\sqrt{1/48+1/j_{1,1}^2}.
 $$
 \end{remark}
\begin{remark}
Note that $\lam_\ell$ is unknown. In Section \ref{guaranteedupper}, we will propose a method to produce a guaranteed upper bound of $\lam_\ell$.
\end{remark}
\section{Asymptotic upper bounds for eigenvalues}
It is well-known that conforming finite element methods provide upper bounds for eigenvalues, but it needs to compute an extra eigenvalue problem. Here we present a simple postprocessing method to provide uppers bound for eigenvalues by the GCR element, see more details in \cite{HuHuangShen2011,ShenD}.

For any $v\in V_{\rm GCR}$, define the interpolation $\Pi_{\rm CR}:V_{\rm GCR}\rightarrow V_{\rm CR}$ by
\begin{equation*}
  \int_E\Pi_{\rm CR}vds=\int_Evds\text{ for any }E\in\cE.
\end{equation*}
It's straightforward to see that $v-\Pi_{\rm CR}v\in V_{\rm B}$. Furthermore, the standard interpolation theory of \cite{Ciarlet2002} gives
\begin{equation}\label{CRinterpolationproperty}
 \|v-\Pi_{\rm CR}v\|\lesssim h\|\nabla_{\rm NC}(v-\Pi_{\rm CR}v)\|\lesssim h^2\|\nabla^2_{\rm NC}v\|,
\end{equation}
An integration by parts leads to the following orthogonality:
\begin{equation}\label{functionOrthogonality}
(\nabla_{\rm NC}(v-\Pi_{\rm CR}v),\nabla_{\rm NC}\Pi_{\rm CR}v)=0.
\end{equation}

For any $v\in V_{\rm CR}$, define the interpolation $\Pi_{\rm c}:V_{\rm CR}\rightarrow V_{\rm c}:=V_{\rm CR}\cap H^1_0(\Omega)$ by
\begin{equation}\label{interpolationC}
(\Pi_{\rm c}v)(z)=\begin{cases}
0&z\in\partial\Omega,\\
\frac{1}{|\omega_z|}\sum_{K\in\omega_z}v|_K(z)&z\not\in\partial\Omega,
\end{cases}
\end{equation}
where $\omega_z$ is the union of elements containing vertex $z$, $|\omega_z|$ is the number of elements containing vertex $z$. The following lemma was proved in \cite{HuHuangShen2011,ShenD,YangHan2014}.
\begin{lemma}
\label{ConforminginterpolationProperty}
Let $v\in V_{\rm CR}$. For any $w\in H^1_0(\Omega)$, there holds that
\begin{equation*}
  \|v-\Pi_{\rm c}v\|\lesssim h\|\nabla_{\rm NC}(v-w)\|,
\end{equation*}
\begin{equation*}
  \|\nabla_{\rm NC}(v-\Pi_{\rm c}v)\|\lesssim \|\nabla_{\rm NC}(v-w)\|.
\end{equation*}
\end{lemma}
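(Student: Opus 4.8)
The plan is to treat $\Pi_{\rm c}$ as a standard averaging (enriching) operator and to route both estimates through a face-jump seminorm. First I would establish the two auxiliary bounds
$$\|v-\Pi_{\rm c}v\|^2\lesssim\sum_{E\in\mathcal{E}}h_E\|[v]\|_{L^2(E)}^2,\qquad \|\nabla_{\rm NC}(v-\Pi_{\rm c}v)\|^2\lesssim\sum_{E\in\mathcal{E}}h_E^{-1}\|[v]\|_{L^2(E)}^2,$$
valid for every $v\in V_{\rm CR}$ and depending only on the jumps of $v$. Granting these, the lemma reduces to controlling each face jump $\|[v]\|_{L^2(E)}$ by the broken gradient of $v-w$.

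To obtain the auxiliary bounds I would work element by element. On each $K$ the difference $v-\Pi_{\rm c}v$ is affine, so finite-element norm equivalence on $K$ gives $\|v-\Pi_{\rm c}v\|_{L^2(K)}^2\lesssim h_K^n\sum_z|(v-\Pi_{\rm c}v)(z)|^2$ and $\|\nabla(v-\Pi_{\rm c}v)\|_{L^2(K)}^2\lesssim h_K^{n-2}\sum_z|(v-\Pi_{\rm c}v)(z)|^2$, the sums running over the vertices $z$ of $K$. At an interior vertex one has $(v-\Pi_{\rm c}v)|_K(z)=\frac{1}{|\omega_z|}\sum_{K'\in\omega_z}(v|_K(z)-v|_{K'}(z))$, and each term is a telescoping sum of one-sided nodal jumps $[v](z)$ across faces of the patch $\omega_z$; at a boundary vertex $(\Pi_{\rm c}v)(z)=0$ and the boundary-face jump enters directly. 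Since $[v]$ is affine on each face $E$, its nodal value obeys $|[v](z)|^2\lesssim h_E^{-(n-1)}\|[v]\|_{L^2(E)}^2$. Inserting these, summing over $K$, and using shape regularity ($h_K\sim h_E$) together with the bounded overlap of the patches yields the two auxiliary bounds.

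The decisive step is to bound $\|[v]\|_{L^2(E)}$ by $\|\nabla_{\rm NC}(v-w)\|$. Here I exploit that $v\in V_{\rm CR}$: on an interior face $\int_E[v]\,ds=0$, and since $w\in H^1_0(\Omega)$ is continuous one has $[v]=[v-w]$ with $\int_E[v-w]\,ds=0$; on a boundary face $\int_E v\,ds=0$ and $w$ vanishes there, so $v=v-w$ again has zero mean on $E$. Writing $\phi=v-w$ and letting $\bar\phi_K$ denote its elementwise mean, the scaled trace inequality combined with the Poincar\'e estimate $\|\phi-\bar\phi_K\|_{L^2(K)}\lesssim h_K\|\nabla\phi\|_{L^2(K)}$ gives $\|\phi-\bar\phi_K\|_{L^2(E)}\lesssim h_E^{1/2}\|\nabla\phi\|_{L^2(K)}$; the constant jump $\bar\phi_{K^+}-\bar\phi_{K^-}$ is then controlled through the zero-mean condition, leading to
$$\|[v]\|_{L^2(E)}=\|[v-w]\|_{L^2(E)}\lesssim h_E^{1/2}\|\nabla_{\rm NC}(v-w)\|_{L^2(\omega_E)},$$
with $\omega_E$ the union of the one or two elements meeting $E$.

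Combining the pieces finishes the proof: in the gradient estimate the weight $h_E^{-1}$ cancels the factor $h_E$ from the squared jump bound, and the bounded overlap of the $\omega_E$ gives $\|\nabla_{\rm NC}(v-\Pi_{\rm c}v)\|^2\lesssim\|\nabla_{\rm NC}(v-w)\|^2$; in the $L^2$ estimate the weight $h_E$ produces an extra power, so $\|v-\Pi_{\rm c}v\|^2\lesssim\sum_E h_E^2\|\nabla_{\rm NC}(v-w)\|^2_{L^2(\omega_E)}\lesssim h^2\|\nabla_{\rm NC}(v-w)\|^2$. I expect the main obstacle to be the jump control of the previous paragraph: one must organize the zero-mean property coming from the Crouzeix--Raviart constraints together with the continuity and boundary vanishing of $w$ so that $[v]$ is replaced by $[v-w]$ and bounded purely by the broken gradient, uniformly in $h$. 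The path/telescoping argument for the nodal values is routine but must respect shape regularity to keep all constants mesh independent.
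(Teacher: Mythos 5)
Your argument is correct. Note first that the paper itself gives no proof of this lemma: it simply states that it ``was proved in \cite{HuHuangShen2011,ShenD,YangHan2014}'', so there is no in-paper argument to match yours against. What you propose is the standard enriching-operator estimate: (i) bound $\|v-\Pi_{\rm c}v\|$ and $\|\nabla_{\rm NC}(v-\Pi_{\rm c}v)\|$ by weighted sums of $\|[v]\|_{L^2(E)}$ via nodal norm-equivalence and telescoping over vertex patches, and (ii) use $[v]=[v-w]$ (continuity and boundary vanishing of $w\in H^1_0(\Omega)$) together with the zero face-mean of $[v]$, the scaled trace inequality and the elementwise Poincar\'e inequality to get $\|[v]\|_{L^2(E)}\lesssim h_E^{1/2}\|\nabla_{\rm NC}(v-w)\|_{L^2(\omega_E)}$. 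All the individual steps (affineness of $v-\Pi_{\rm c}v$ and of $[v]|_E$, the inverse estimate on $E$, face-connectedness of vertex stars in a simplicial partition of a polytope, bounded overlap under shape regularity) are sound, and the powers of $h_E$ combine exactly as you say. It is worth observing that the closest thing the paper does prove is the explicit-constant bound for $\beta_2$ in Section 6, which uses the same nodal-value decomposition but measures the jump in $L^\infty(E')$ and converts it to a tangential-derivative bound via $\int_{E'}[w]\,ds=0$, yielding control by $\|\nabla_{\rm NC}w\|$ alone with a computable constant; your route, by inserting $w$ before estimating the jump, produces the sharper right-hand side $\|\nabla_{\rm NC}(v-w)\|$ that the lemma actually asserts, at the price of unspecified shape-regularity constants. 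The only point worth making explicit in a final write-up is the boundary-vertex case: one must connect $v|_K(z)$ by a face-chain inside $\omega_z$ to a boundary face of $\partial\Omega$ containing $z$, where the ``jump'' is the trace of $v$ itself and the zero-mean condition $\int_E v\,ds=0$ plays the role of $\int_E[v]\,ds=0$.
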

\eqref{CRinterpolationproperty} and Lemma \ref{ConforminginterpolationProperty} yield the following result.
\begin{corollary}
\label{corollary}
Let $u$ and $u_{\rm GCR}$ be eigenfunctions of \eqref{eigen} and \eqref{discreteeigentotal}, respectively. Suppose that $u\in H^{1+s}(\Omega),0<s\leq 1$. There holds that
\begin{equation*}
\|u_{\rm GCR}-\Pi_{\rm c}(\Pi_{\rm CR}u_{\rm GCR})\|\lesssim h^{1+s}|u|_{1+s},
\end{equation*}
\begin{equation*}
\|\nabla_{\rm NC}(u_{\rm GCR}-\Pi_{\rm c}(\Pi_{\rm CR}u_{\rm GCR}))\|_A\lesssim h^{s}|u|_{1+s}.
\end{equation*}
\end{corollary}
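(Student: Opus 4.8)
The plan is to split the post-processing error through the intermediate object $\Pi_{\rm CR}u_{\rm GCR}\in V_{\rm CR}$ and estimate the two pieces separately. Writing $u_{\rm GCR}-\Pi_{\rm c}(\Pi_{\rm CR}u_{\rm GCR})=(u_{\rm GCR}-\Pi_{\rm CR}u_{\rm GCR})+(\Pi_{\rm CR}u_{\rm GCR}-\Pi_{\rm c}(\Pi_{\rm CR}u_{\rm GCR}))$, the first summand is the bubble part $u_{\rm GCR}-\Pi_{\rm CR}u_{\rm GCR}\in V_{\rm B}$ and the second is a Crouzeix--Raviart function fed into the averaging operator. By the triangle inequality, both the $L^2$ and the broken-energy bounds reduce to controlling the single quantity $\|\nabla_{\rm NC}(u_{\rm GCR}-\Pi_{\rm CR}u_{\rm GCR})\|$ together with $\|\nabla_{\rm NC}(u_{\rm GCR}-u)\|$, after which the stated orders $h^{1+s}|u|_{1+s}$ and $h^s|u|_{1+s}$ follow by inserting the extra factor $h$ supplied by \eqref{CRinterpolationproperty} and Lemma \ref{ConforminginterpolationProperty}.

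For the Crouzeix--Raviart-to-conforming summand I would apply Lemma \ref{ConforminginterpolationProperty} with $v=\Pi_{\rm CR}u_{\rm GCR}$ and the free function $w=u\in H^1_0(\Omega)$, which gives $\|\Pi_{\rm CR}u_{\rm GCR}-\Pi_{\rm c}(\Pi_{\rm CR}u_{\rm GCR})\|\lesssim h\|\nabla_{\rm NC}(\Pi_{\rm CR}u_{\rm GCR}-u)\|$ and $\|\nabla_{\rm NC}(\Pi_{\rm CR}u_{\rm GCR}-\Pi_{\rm c}(\Pi_{\rm CR}u_{\rm GCR}))\|\lesssim\|\nabla_{\rm NC}(\Pi_{\rm CR}u_{\rm GCR}-u)\|$. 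A further triangle inequality bounds $\|\nabla_{\rm NC}(\Pi_{\rm CR}u_{\rm GCR}-u)\|$ by the bubble gradient plus $\|\nabla_{\rm NC}(u_{\rm GCR}-u)\|$, and the latter is $\lesssim h^s|u|_{1+s}$ directly from the a priori estimate \eqref{eqorder} combined with the norm equivalence \eqref{constant1}.

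The crux is therefore the bubble-gradient estimate $\|\nabla_{\rm NC}(u_{\rm GCR}-\Pi_{\rm CR}u_{\rm GCR})\|\lesssim h^s|u|_{1+s}$. Here I would exploit the orthogonality \eqref{functionOrthogonality}: testing the squared norm against $\nabla_{\rm NC}\Pi_{\rm CR}u_{\rm GCR}$ replaces one factor by $\nabla_{\rm NC}u_{\rm GCR}$, and splitting $u_{\rm GCR}=u+(u_{\rm GCR}-u)$ leaves a term paired with $\nabla u$ and a term paired with $\nabla_{\rm NC}(u_{\rm GCR}-u)$. The decisive observation is that the gradient of any bubble has vanishing integral on each element, since $\int_K\nabla\phi_K\,dx=\sum_{E\subset\partial K}\nu_E\int_E\phi_K\,ds=0$ by Lemma \ref{Lemma:caculation}; hence the term paired with $\nabla u$ equals the same pairing against $\nabla u-\Pi_0\nabla u$ and is controlled by $\|\nabla u-\Pi_0\nabla u\|\lesssim h^s|u|_{1+s}$. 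After dividing out one power of the bubble-gradient norm, this yields $\|\nabla_{\rm NC}(u_{\rm GCR}-\Pi_{\rm CR}u_{\rm GCR})\|\lesssim\|\nabla u-\Pi_0\nabla u\|+\|\nabla_{\rm NC}(u_{\rm GCR}-u)\|\lesssim h^s|u|_{1+s}$.

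Assembling the pieces, the broken-energy bound follows at order $h^s|u|_{1+s}$ and, since each component of $A$ is bounded, $\|\nabla_{\rm NC}\cdot\|_A\lesssim\|\nabla_{\rm NC}\cdot\|$ upgrades it to the $A$-norm claimed. For the $L^2$ bound the extra factor $h$ in \eqref{CRinterpolationproperty} (first inequality, applied to the bubble part) and in Lemma \ref{ConforminginterpolationProperty} turns both summands into $h\cdot h^s|u|_{1+s}=h^{1+s}|u|_{1+s}$. I expect the only genuine subtlety to be the bubble-gradient step, specifically recognizing that the zero-edge-mean property from Lemma \ref{Lemma:caculation} forces the elementwise mean of $\nabla_{\rm NC}(u_{\rm GCR}-\Pi_{\rm CR}u_{\rm GCR})$ to vanish, which is what allows the consistency-type factor $\nabla u-\Pi_0\nabla u$ to enter and deliver the sharp order.
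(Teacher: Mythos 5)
Your proof is correct and follows the same route the paper intends: the paper dispatches this corollary with the single remark that \eqref{CRinterpolationproperty} and Lemma \ref{ConforminginterpolationProperty} yield it, and your decomposition through $\Pi_{\rm CR}u_{\rm GCR}$ together with the a priori bound \eqref{eqorder} is exactly the expansion of that remark. You in fact supply the one nontrivial detail the paper leaves implicit, namely the estimate $\|\nabla_{\rm NC}(u_{\rm GCR}-\Pi_{\rm CR}u_{\rm GCR})\|\lesssim\|\nabla u-\Pi_0\nabla u\|+\|\nabla_{\rm NC}(u-u_{\rm GCR})\|$ via the orthogonality \eqref{functionOrthogonality} and the elementwise zero mean of the bubble gradient, and that argument is sound.
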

Define the Rayleigh quotient
$$
\lam_{c}=\frac{(A\nabla\Pi_{\rm c}(\Pi_{\rm CR}u_{\rm GCR}),\Pi_{\rm c}(\Pi_{\rm CR}u_{\rm GCR}))}{(\Pi_{\rm c}(\Pi_{\rm CR}u_{\rm GCR}),\Pi_{\rm c}(\Pi_{\rm CR}u_{\rm GCR}))}.
$$
\begin{theorem}
Suppose $(\lam,u)$ be eigenpairs of \eqref{eigen} and $u\in H^{1+s}(\Omega),0<s\leq1$, then
$$
|\lam-\lam_{\rm c}|\lesssim h^{2s}|u|_{1+s}.
$$
Moreover, $\lam_{\rm c}\geq\lam$ provided that $h$ is small enough.
\end{theorem}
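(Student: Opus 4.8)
The plan is to prove this final theorem, which asserts that the post-processed Rayleigh quotient $\lam_{\rm c}$ converges to $\lam$ at rate $h^{2s}$ and, moreover, gives a guaranteed upper bound once $h$ is small. Set $w := \Pi_{\rm c}(\Pi_{\rm CR}u_{\rm GCR}) \in V_{\rm c} = V_{\rm CR}\cap H^1_0(\Omega)$, which is a \emph{conforming} function. Since $\lam_{\rm c}$ is the Rayleigh quotient of $w$ over the conforming space $H^1_0(\Omega)$, the variational (Rayleigh--Ritz) characterization \eqref{minmax} immediately gives $\lam \le \lam_{\rm c}$ whenever $w$ is comparable to a normalized eigenfunction; the real work is the quantitative bound $|\lam - \lam_{\rm c}| \lesssim h^{2s}|u|_{1+s}$, from which the one-sided conclusion follows because the error term is genuinely positive in the limit.

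First I would establish the quantitative estimate by a standard Rayleigh-quotient perturbation argument. The key inputs are already available: Corollary \ref{corollary} controls both $\|u_{\rm GCR} - w\|$ and $\|\nabla_{\rm NC}(u_{\rm GCR} - w)\|_A$ by $h^{1+s}|u|_{1+s}$ and $h^{s}|u|_{1+s}$ respectively, while the a priori estimate \eqref{eqorder} controls $\|u - u_{\rm GCR}\|$ and $\|\nabla_{\rm NC}(u-u_{\rm GCR})\|_A$. Combining these via the triangle inequality yields $\|u - w\| \lesssim h^{1+s}|u|_{1+s}$ and $\|\nabla_{\rm NC}(u - w)\|_A \lesssim h^{s}|u|_{1+s}$. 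I would then write $\lam_{\rm c} = \|\nabla w\|_A^2 / \|w\|^2$ and compare numerator and denominator to $\|\nabla u\|_A^2 = \lam$ and $\|u\|^2 = 1$ using the identities $\|\nabla w\|_A^2 - \|\nabla u\|_A^2 = (A\nabla(w-u),\nabla(w+u))$ and the analogous $L^2$ identity; each factor $\|w\mp u\|$ or $\|\nabla(w\mp u)\|_A$ contributes the needed power of $h$, giving $|\lam_{\rm c} - \lam| \lesssim h^{2s}|u|_{1+s}$ (after absorbing the fact that $\|w\|^2 \to 1$ so the denominator stays bounded below).

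For the sharper upper-bound claim I would extract the sign of the leading term. Because $w \in H^1_0(\Omega)$ is admissible in the continuous minimax principle \eqref{minmax}, one has exactly $\lam_{\rm c} \ge \lam$ \emph{for the first eigenvalue}, and more generally the variational characterization forces $\lam_{\rm c}$ to lie above $\lam$ up to the second-order discrepancy between $w$ and the eigenspace $E_\ell$. The cleanest route is to expand $w = u + e$ with $e$ small and orthogonalize: the dominant error is the Dirichlet-energy defect $\|\nabla_{\rm NC}(u-w)\|_A^2$, which is nonnegative and of order $h^{2s}$, whereas all cross terms either vanish by the eigenvalue relation $(A\nabla u,\nabla e) = \lam(u,e)$ or are of strictly higher order; this makes $\lam_{\rm c} - \lam$ positive for small $h$.

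The main obstacle I anticipate is the upper-bound (sign) assertion rather than the convergence rate. For a \emph{multiple} or higher eigenvalue $\lam_\ell$ one cannot simply plug a single trial function into \eqref{minmax}; one must track the whole approximate eigenspace $\Pi_{\rm c}(\Pi_{\rm CR} E_{\ell,\rm GCR})$, verify it remains $\ell$-dimensional for small $h$ (using the nonsingularity of the interpolation composition guaranteed by Corollary \ref{corollary}), and show its worst-case Rayleigh quotient exceeds $\lam_\ell$. The delicate point is ensuring the positive energy-defect term dominates the indefinite $L^2$-normalization correction of the same formal order $h^{2s}$; controlling the constant in that comparison, and confirming it is strictly positive as $h\to 0$, is where the argument requires the most care.
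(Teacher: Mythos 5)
Your overall strategy --- set $w=\Pi_{\rm c}(\Pi_{\rm CR}u_{\rm GCR})$, control $\|u-w\|$ and $\|\nabla(u-w)\|_A$ by the triangle inequality from \eqref{eqorder} and Corollary \ref{corollary}, and read off both the rate and the sign from an expansion of the Rayleigh quotient of $w$ --- is exactly the paper's. But the rate argument as literally described in your second paragraph fails: $\|\nabla w\|_A^2-\|\nabla u\|_A^2=(A\nabla(w-u),\nabla(w+u))$ is only $O(h^s)$, because the factor $\|\nabla(w+u)\|_A$ is $O(1)$, not $O(h^s)$, so ``each factor contributes a power of $h$'' is not true for the energy difference. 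To reach $O(h^{2s})$ you must use the eigenfunction equation $(A\nabla u,\nabla w)=\lam(u,w)$ on the cross term, which yields the identity the paper actually works with, namely $\|w\|^2(\lam_{\rm c}-\lam)=\|\nabla(u-w)\|_A^2-\lam\|u-w\|^2$. Your third paragraph contains this idea (``cross terms vanish by the eigenvalue relation''), so the fix is available to you, but it must replace the difference-of-squares route rather than supplement it. (A minor correction: the triangle inequality gives $\|u-w\|\lesssim h^{2s}|u|_{1+s}$, dominated by $\|u-u_{\rm GCR}\|$ from \eqref{eqorder}, not $h^{1+s}$; this still suffices.)

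For the sign, you correctly flag the delicate point but do not resolve it, and nonnegativity of the energy defect is not enough: from the identity, $\lam_{\rm c}\ge\lam$ requires $\lam\|u-w\|^2\le\|\nabla(u-w)\|_A^2$, and a priori nothing prevents the $L^2$ term from winning. The missing ingredient is a \emph{lower} bound on the error, i.e.\ the saturation condition $h^s\lesssim\|\nabla(u-w)\|_A$, which the paper imports from \cite{HuHuangLin2010}; with it, $\|\nabla(u-w)\|_A^2\gtrsim h^{2s}$ while $\lam\|u-w\|^2\lesssim h^{4s}$, and the sign follows for small $h$. Finally, the minimax/dimension-counting concerns you raise for higher or multiple eigenvalues belong to the guaranteed upper bound $\lam^m_{\ell,{\rm c}}$ of Section~\ref{guaranteedupper}, not to this theorem: here $\lam_{\rm c}$ is a single Rayleigh quotient and the paper's sign argument never invokes \eqref{minmax}.
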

\begin{proof}
The proof is similar to that of Theorem 3.4 in \cite{ShenD} and Theorem 4.1 in \cite{YangHan2014}. Let $w=\Pi_c(\Pi_{\rm CR}u_{\rm GCR})$. An elementary manipulation leads
\begin{equation}\label{eqthem}
  \begin{split}
  \|\nabla(u-w)\|^2_A&=(A\nabla(u-w),\nabla(u-w))=\lam+\|w\|^2\lam_c-2(A\nabla u,\nabla w)\\
  &=\lam+\|w\|^2\lam_c-2\lam(u,w)\\
  &=\|w\|^2(\lam_c-\lam)+\lam\|u-w\|^2.
  \end{split}
\end{equation}
Thanks to \eqref{eqorder} and Corollary \ref{corollary}, it holds that
\begin{equation}\label{est1}
 \|\nabla(u-w)\|_A\leq\|\nabla_{\rm NC}(u-u_{\rm GCR})\|_A+\|\nabla_{\rm NC}(u_{\rm GCR}-w)\|_A\lesssim h^s|u|_{1+s}
\end{equation}
and
\begin{equation}\label{est2}
 \|u-w\|\leq \|u-u_{\rm GCR}\|+\|u_{\rm GCR}-w\|\lesssim (h^{2s}+h^{1+s})|u|_{1+s}\lesssim h^{2s}|u|_{1+s}.
\end{equation}
On the other hand $\left|\|w\|-\|u\|\right|\leq\|u-w\|\lesssim h^{2s}|u|_{1+s}$. Hence $\|w\|$ is bounded. Substituting \eqref{est1} and \eqref{est2} into \eqref{eqthem} yields that
$$
|\lam-\lam_{\rm c}|\lesssim h^{2s}|u|_{1+s}.
$$
The following saturation condition holds, see \cite{HuHuangLin2010},
$$
h^s\lesssim \|\nabla(u-w)\|_A.
$$
Hence, when $h$ is small enough, $\|u-w\|$ is of higher order than $\|\nabla(u-w)\|_A$. This and \eqref{eqthem} yield that
$$
0\leq\|w\|^2(\lam_c-\lam),
$$
which completes the proof.
\end{proof}
\section{Guaranteed upper bounds for eigenvalues}
\label{guaranteedupper}
Because of the unknown of the exact eigenvalues, we need an upper bound of $\lam_\ell$ to guarantee \eqref{hconstrain}. Since $\lam_{c}$ is the upper bound of $\lam$ in the asymptotic sense. We propose a method to guarantee upper bounds for eigenvalues. Suppose $(\lam_\ell,u_{\ell})$ be the $\ell$-th eigenpair of \eqref{eigen} and $E_{\ell,\rm GCR}$ be defined in \eqref{discretespan}.
Define
\begin{equation}\label{eigenvalueGupper}
  \lam_{\ell,c}^m:=\sup_{v\in\Pi_{\rm c}(\Pi_{\rm CR}E_{\ell,\rm GCR})}\frac{(A\nabla v,\nabla v)}{(v,v)}.
\end{equation}

 \begin{lemma}
 Suppose that $u_\ell\in H^{1+s}(\Omega)$ with $0<s\leq 1$, then
 $$|\lam^m_{\ell,\rm c}-\lam_\ell|\lesssim h^{2s}|u|_{1+s}.
 $$
 \end{lemma}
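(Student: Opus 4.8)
The plan is to bound the discrete-to-conforming Rayleigh quotient gap uniformly over the $\ell$-dimensional subspace $E_{\ell,\rm GCR}$ by $O(h^{2s})$, and then invoke the min-max principle to transfer this pointwise control into an estimate on the single number $\lam^m_{\ell,c}$. The key structural observation is that $\lam^m_{\ell,c}$ is a max-over-a-$k$-dimensional-space quantity, so it should be compared against the exact $\lam_\ell$ through the Rayleigh-Ritz characterization \eqref{minmax} together with the discrete one \eqref{disrceteRayleigh}. The earlier single-eigenfunction estimate from the theorem of Section 5 (and Corollary \ref{corollary}) already controls $\|u-\Pi_{\rm c}(\Pi_{\rm CR}u_{\rm GCR})\|$ and $\|\nabla_{\rm NC}(u_{\rm GCR}-\Pi_{\rm c}(\Pi_{\rm CR}u_{\rm GCR}))\|_A$ by $h^{1+s}|u|_{1+s}$ and $h^s|u|_{1+s}$ respectively; the work here is to upgrade from one eigenfunction to the whole cluster/span.

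First I would fix an arbitrary normalized $v_{\rm GCR}=\sum_{k=1}^\ell c_k u_{k,\rm GCR}\in E_{\ell,\rm GCR}$ and set $w=\Pi_{\rm c}(\Pi_{\rm CR}v_{\rm GCR})$. Because $\Pi_{\rm c}\circ\Pi_{\rm CR}$ is linear, $w$ is a genuine competitor living in $V_{\rm c}\subset H^1_0(\Omega)$, so $\Pi_{\rm c}(\Pi_{\rm CR}E_{\ell,\rm GCR})$ is an $\ell$-dimensional subspace of $H^1_0(\Omega)$ once $h$ is small (its dimension is preserved, which one checks exactly as $\Pi_{\rm GCR}E_\ell$ was shown to be $\ell$-dimensional in the proof of Theorem \ref{guranteelowerbound}). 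By the max-min principle \eqref{minmax}, using $\Pi_{\rm c}(\Pi_{\rm CR}E_{\ell,\rm GCR})$ as a trial space gives the one-sided bound $\lam_\ell\le\lam^m_{\ell,c}$ directly. For the reverse inequality I would choose, for the maximizer $v_{\rm GCR}$ attaining the supremum in \eqref{eigenvalueGupper}, the corresponding linear combination $u=\sum_k c_k u_k\in E_\ell$ of exact eigenfunctions and estimate $|\,R(w)-R(u)\,|$ where $R(\cdot)$ is the Rayleigh quotient; since $R(u)\le\lam_\ell$ on $E_\ell$, this controls $\lam^m_{\ell,c}-\lam_\ell$ from above.

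The core estimate is therefore a stability bound of the Rayleigh quotient under the combined perturbations in numerator and denominator. I would repeat the algebraic identity used in \eqref{eqthem} of the Section 5 theorem, now applied with the cluster combination rather than a single eigenfunction, writing
\begin{equation*}
\|\nabla(u-w)\|_A^2=\|w\|^2\bigl(R(w)-\lam\bigr)+\lam\|u-w\|^2,
\end{equation*}
and then bounding $\|\nabla(u-w)\|_A\lesssim h^s|u|_{1+s}$ and $\|u-w\|\lesssim h^{2s}|u|_{1+s}$ exactly as in \eqref{est1}--\eqref{est2}, applied termwise to the finite sum. The uniformity over the $\ell$-dimensional span is what requires care: the hidden constants must not degrade as the coefficients $c_k$ vary over the unit sphere, which follows because there are only finitely many basis functions $u_1,\dots,u_\ell$ and each enjoys the per-eigenfunction estimate of Corollary \ref{corollary}, so $|u|_{1+s}\lesssim\bigl(\sum_k c_k^2|u_k|_{1+s}^2\bigr)^{1/2}$ stays bounded on $\{\sum_k c_k^2=1\}$.

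\textbf{The main obstacle} I anticipate is establishing this uniformity cleanly, i.e.\ controlling $\|\nabla_{\rm NC}(v_{\rm GCR}-w)\|_A$ and $\|v_{\rm GCR}-w\|$ simultaneously for \emph{all} unit vectors in $E_{\ell,\rm GCR}$ by a single constant times $h^s$ (resp.\ $h^{1+s}$). The per-eigenfunction Corollary \ref{corollary} is stated for individual eigenfunctions, and to sum it up I must pass through the exact combination $u$ and compare $v_{\rm GCR}$ to $u$ using the a priori bound \eqref{eqorder}; the subtlety is that for clustered or repeated eigenvalues the discrete eigenfunctions $u_{k,\rm GCR}$ need not converge individually to $u_k$, only the spaces converge, so I would phrase the argument at the level of subspaces (gap/aperture between $E_\ell$ and $E_{\ell,\rm GCR}$) rather than matching basis functions one-to-one. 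Once that subspace-convergence is in hand, combining the two one-sided inequalities yields $|\lam^m_{\ell,c}-\lam_\ell|\lesssim h^{2s}|u|_{1+s}$ and completes the proof.
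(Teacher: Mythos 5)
Your proposal is correct in outline but takes a more self-contained route than the paper. The paper's own proof is essentially two lines: it invokes the abstract spectral approximation theory of Babu\v{s}ka--Osborn to bound $|\lam^m_{\ell,\rm c}-\lam_\ell|$ by the square of the best-approximation error $\inf_{v\in\Pi_{\rm c}(\Pi_{\rm CR}E_{\ell,\rm GCR})}\|\nabla(v-u_\ell)\|_A$, and then bounds that infimum by plugging in $v=\Pi_{\rm c}(\Pi_{\rm CR}u_{\ell,\rm GCR})$ and citing \eqref{est1}. You instead re-derive the needed comparison by hand: the one-sided bound $\lam_\ell\le\lam^m_{\ell,\rm c}$ from the min-max principle \eqref{minmax} (contingent on the image space being $\ell$-dimensional, which the paper defers to the separate lemma with condition \eqref{hconstrain2}), and the reverse bound from the Rayleigh-quotient identity \eqref{eqthem} applied to the maximizer of \eqref{eigenvalueGupper}. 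What your approach buys is transparency: you make explicit the uniformity-over-the-span issue and the fact that for clustered eigenvalues the discrete eigenfunctions need not converge individually, so the argument must be run at the level of subspace gaps --- this is precisely the content hidden inside the Babu\v{s}ka--Osborn citation, and in fact the paper's own application of \eqref{est1} to the pair $(u_\ell, u_{\ell,\rm GCR})$ quietly assumes the same per-eigenfunction convergence \eqref{eqorder} that you flag as problematic. What the paper's route buys is brevity. The one caveat on your side is that the subspace-gap argument is announced but not executed; as written, your reverse inequality still passes through the coefficient-matched combination $u=\sum_k c_k u_k$, which is exactly the step that degenerates for repeated eigenvalues. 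Since you identify the issue and the standard remedy, and since the paper outsources the identical difficulty to a citation, I would not call this a failure of the approach, but to make the proof complete you would need to either carry out the gap estimate between $E_\ell$ and $E_{\ell,\rm GCR}$ or, like the paper, lean on the abstract theory.
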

 \begin{proof}
 Following the theory of \cite{BabuskaOsborn1991}, there holds that
\begin{equation*}
|\lam^m_{\ell,\rm c}-\lam_\ell|\lesssim\left(\inf_{v\in\Pi_{\rm c}(\Pi_{\rm CR}E_{\ell,\rm GCR})}\|\nabla(v-u_\ell)\|_A\right)^2\lesssim\|\nabla(\Pi_{\rm c}(\Pi_{\rm CR}u_{\ell,\rm GCR})-u_\ell)\|^2_A.
\end{equation*}
Hence, the above result and \eqref{est1} yield that
\begin{equation*}
|\lam^m_{\ell,\rm c}-\lam_\ell|\lesssim h^{2s}|u|_{1+s}.
\end{equation*}
This completes the proof.
 \end{proof}
 Assume that $\Pi_{\rm c}(\Pi_{\rm CR}E_{\ell,\rm GCR})$ is $\ell$-dimensional. The Rayleigh-Ritz principle \eqref{minmax} implies that $\lam_{\ell,c}^m$ is the upper bound of $\lam_\ell$. We propose some conditions in the following lemma to guarantee that $\Pi_{\rm c}(\Pi_{\rm CR}E_{\ell,\rm GCR})$ is $\ell$-dimensional.
\begin{lemma}
Suppose there exist computable constants $\beta_1$ and $\beta_2$  such that
$$
 \|v-\Pi_{\rm CR}v\|\leq\beta_1h\|\nabla_{\rm NC}(v-\Pi_{\rm CR}v)\|\text{ for any }v\in V_{\rm GCR},
 $$
 $$
 \|w-\Pi_{\rm c}w\|\leq\beta_2h\|\nabla_{\rm NC}w\|\text{ for any }w\in V_{\rm CR}.
 $$
Then, $\Pi_{\rm c}(\Pi_{\rm CR}E_{\ell,\rm GCR})$ is $\ell$-dimensional provided that
\begin{equation}\label{hconstrain2}
h<\frac{1}{(\beta_1+\beta_2)C_A\sqrt{\lam_{\ell,\rm GCR}}}.
\end{equation}
 \end{lemma}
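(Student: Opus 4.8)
The plan is to show that the linear operator $\Pi_{\rm c}\circ\Pi_{\rm CR}$ is injective on the $\ell$-dimensional space $E_{\ell,\rm GCR}$; injectivity of a linear map on a finite-dimensional space immediately preserves the dimension of the image, so $\Pi_{\rm c}(\Pi_{\rm CR}E_{\ell,\rm GCR})$ is again $\ell$-dimensional. To establish injectivity it suffices to prove that $\|\Pi_{\rm c}(\Pi_{\rm CR}v)\|>0$ for every nonzero $v\in E_{\ell,\rm GCR}$, which I would obtain from a reverse triangle inequality once the $L^2$ distance $\|v-\Pi_{\rm c}(\Pi_{\rm CR}v)\|$ is controlled by a quantity strictly smaller than $\|v\|$.

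First I would split the error through the intermediate interpolant by the triangle inequality,
\[
\|v-\Pi_{\rm c}(\Pi_{\rm CR}v)\|\le\|v-\Pi_{\rm CR}v\|+\|\Pi_{\rm CR}v-\Pi_{\rm c}(\Pi_{\rm CR}v)\|,
\]
and estimate the two pieces with the two computable constants in the hypothesis. For the first term the assumption gives $\|v-\Pi_{\rm CR}v\|\le\beta_1 h\|\nabla_{\rm NC}(v-\Pi_{\rm CR}v)\|$; for the second term, writing $w=\Pi_{\rm CR}v\in V_{\rm CR}$, the assumption gives $\|w-\Pi_{\rm c}w\|\le\beta_2 h\|\nabla_{\rm NC}w\|$. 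The key simplification is the orthogonality \eqref{functionOrthogonality}, which yields the Pythagorean identity $\|\nabla_{\rm NC}v\|^2=\|\nabla_{\rm NC}(v-\Pi_{\rm CR}v)\|^2+\|\nabla_{\rm NC}\Pi_{\rm CR}v\|^2$ and hence both $\|\nabla_{\rm NC}(v-\Pi_{\rm CR}v)\|\le\|\nabla_{\rm NC}v\|$ and $\|\nabla_{\rm NC}\Pi_{\rm CR}v\|\le\|\nabla_{\rm NC}v\|$. Substituting these bounds collapses both contributions onto $\|\nabla_{\rm NC}v\|$, so that $\|v-\Pi_{\rm c}(\Pi_{\rm CR}v)\|\le(\beta_1+\beta_2)h\|\nabla_{\rm NC}v\|$.

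Next I would convert the gradient norm into an $L^2$ norm using the discrete Rayleigh-Ritz principle \eqref{disrceteRayleigh} together with \eqref{constant1}. Since $v\in E_{\ell,\rm GCR}$, we have $\|\nabla_{\rm NC}v\|_A^2\le\lam_{\ell,\rm GCR}\|v\|^2$, so $\|\nabla_{\rm NC}v\|\le C_A\|\nabla_{\rm NC}v\|_A\le C_A\sqrt{\lam_{\ell,\rm GCR}}\,\|v\|$. Combining with the previous bound gives the single clean estimate
\[
\|v-\Pi_{\rm c}(\Pi_{\rm CR}v)\|\le(\beta_1+\beta_2)C_A\sqrt{\lam_{\ell,\rm GCR}}\,h\,\|v\|.
\]
Under the meshsize condition \eqref{hconstrain2} the prefactor is strictly less than one, so the reverse triangle inequality yields $\|\Pi_{\rm c}(\Pi_{\rm CR}v)\|\ge\bigl(1-(\beta_1+\beta_2)C_A\sqrt{\lam_{\ell,\rm GCR}}\,h\bigr)\|v\|>0$ whenever $v\ne0$, completing the injectivity argument.

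I do not expect a serious obstacle; the argument is a direct energy-orthogonality bound in the spirit of Theorem \ref{guranteelowerbound}. The only point requiring care is ensuring that the two computable constants $\beta_1,\beta_2$ are applied to the correct functions ($v\in V_{\rm GCR}$ for $\Pi_{\rm CR}$ and $w=\Pi_{\rm CR}v\in V_{\rm CR}$ for $\Pi_{\rm c}$) and that \eqref{functionOrthogonality} is invoked to bound each interpolant's gradient by $\|\nabla_{\rm NC}v\|$ rather than leaving separate terms; mishandling either would weaken the constant and spoil the sharp threshold in \eqref{hconstrain2}.
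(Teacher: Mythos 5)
Your proposal is correct and follows essentially the same route as the paper: the same triangle-inequality split through $\Pi_{\rm CR}v$, the same use of the orthogonality \eqref{functionOrthogonality} to bound both gradient terms by $\|\nabla_{\rm NC}v\|$, and the same conversion via \eqref{constant1} and the discrete Rayleigh--Ritz principle to reach the threshold \eqref{hconstrain2}. The only cosmetic difference is that you phrase the conclusion as injectivity of the linear map while the paper normalizes $\|v\|=1$ and checks $\|\Pi_{\rm c}(\Pi_{\rm CR}v)\|>0$ directly; these are equivalent.
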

 \begin{proof}
We adopt a similar argument in Theorem \ref{guranteelowerbound}. For any $v=\sum^\ell_{k=1}\xi_iu_{i,\rm GCR}$ and $\|v\|=1$, the triangle inequality yields
\begin{equation*}
\begin{split}
\|v-\Pi_{\rm c}(\Pi_{\rm CR}v)\|&\leq\|v-\Pi_{\rm CR}v\|+\|\Pi_{\rm CR}v-\Pi_{\rm c}(\Pi_{\rm CR}v)\|\\
&\leq\beta_1h\|\nabla_{\rm NC}(v-\Pi_{\rm CR}v)\|+\beta_2h\|\nabla_{\rm NC}\Pi_{\rm CR}v\|.
\end{split}
\end{equation*}
Due to \eqref{functionOrthogonality} and the constant in \eqref{constant1}, there holds the following estimate
\begin{equation*}
\begin{split}
  \|v-\Pi_{\rm c}(\Pi_{\rm CR}v)\|&\leq(\beta_1+\beta_2)h\|\nabla_{\rm NC}v\|\leq(\beta_1+\beta_2)C_Ah\|\nabla_{\rm NC}v\|_A\\
&\leq(\beta_1+\beta_2)C_Ah\sqrt{\lam_{\ell,\rm GCR}}.
\end{split}
\end{equation*}
Then, the condition for $h$ in \eqref{hconstrain2} yields
\begin{equation*}
  \|\Pi_{\rm c}(\Pi_{\rm CR}v)\|\geq 1- \|v-\Pi_{\rm c}(\Pi_{\rm CR}v)\|\geq1-(\beta_1+\beta_2)C_Ah\sqrt{\lam_{\ell,\rm GCR}}>0.
\end{equation*}
Hence, $\Pi_{\rm c}(\Pi_{\rm CR}E_{\ell,\rm GCR})$ is $\ell$-dimensional.
\end{proof}
\begin{remark}
\eqref{hconstrain2} is not a strict condition. Indeed, to obtain good approximation of the $\ell$-the eigenvalue $\lam_\ell$ by finite element methods, $\lam_\ell h^2\lesssim1$ is always required.
\end{remark}
We show that $\beta_1$ is computable. Note that $(v-\Pi_{CR}v)|_K\in\sspan\{\phi_K\}$, where $\phi_K$ is defined as in \eqref{bubblefunction}. For each $K\in\mathcal{T}$, we can find a positive constant $\beta_K$ such that
$$
\|\phi_K\|_{L^2(K)}\leq\beta_K\|\nabla\phi_K\|_{L^2(K)}.
$$
Then, we take
\begin{equation*}
  \beta_1=\frac{\max_{K\in\mathcal{T}}\{\beta_K\}}{h}.
\end{equation*}
There are several results concerning the constant for the interpolation operator $\Pi_{\rm CR}$ in two dimensions, see for instance \cite{CarstensenGallistl2013,MaoShi}. We present the result in \cite{CarstensenGallistl2013} as follows
\begin{equation*}
\|v-\Pi_{\rm CR}v\|_{L^2(K)}\leq\sqrt{j_{1,1}^{-2}+1/48}h_K\|\nabla(v-\Pi_{\rm CR}v)\|_{L^2(K)} \text{ for any }v\in H^1(K).
\end{equation*}
Hence we can choose $\beta_1=\sqrt{j_{1,1}^{-2}+1/48}\approx0.2984$ in two dimensions. As for any dimension, we give the constant for the interpolation operator by following the arguments in \cite{CarstensenGallistl2013}.
\begin{lemma}
Given $K\in\mathcal{T}$, let $f\in H^1(K)$ be a function with vanishing mean on any $n-1$ dimensional subsimplex $E\subset\partial{K}$. Then, there holds that
\begin{equation}\label{integralaverage}
\begin{split}
\left|\frac{1}{|K|}\int_Kf\diff x\right|\leq\frac{1}{\sqrt{2n(n+1)(n+2)}|K|^{1/2}}h_K\|\nabla f\|_{L^2(K)}.
\end{split}
\end{equation}
\end{lemma}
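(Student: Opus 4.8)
The plan is to represent the elementwise mean $\frac{1}{|K|}\int_K f\dx$ as a gradient pairing against a carefully chosen quadratic weight, and then exploit the vanishing face averages of $f$ to kill the boundary contribution. Concretely, I would introduce the function
\begin{equation*}
\psi(x):=-\frac{1}{2n}\left|x-{\rm mid}(K)\right|^2,
\end{equation*}
which is engineered so that $\Delta\psi\equiv-1$ on $K$. Green's first identity then gives
\begin{equation*}
\int_K\na f\cdot\na\psi\dx=-\int_Kf\,\Delta\psi\dx+\sum_{E\subset\pa K}\int_Ef\,(\na\psi\cdot\nu_E)\ds=\int_Kf\dx+\sum_{E\subset\pa K}\int_Ef\,(\na\psi\cdot\nu_E)\ds.
\end{equation*}

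The key observation — exactly the geometric fact already used in Lemma \ref{edgeconstant} — is that $\na\psi=-\frac{1}{n}(x-{\rm mid}(K))$, so on each face $E$ the quantity $\na\psi\cdot\nu_E=-\frac{1}{n}\left(x\cdot\nu_E-{\rm mid}(K)\cdot\nu_E\right)$ is constant, because $x\cdot\nu_E$ is constant along the hyperplane containing $E$. Since $f$ has vanishing mean on every $E\subset\pa K$, each boundary term drops out and I would obtain the clean identity $\int_Kf\dx=\int_K\na f\cdot\na\psi\dx$. Cauchy--Schwarz then yields $\left|\int_Kf\dx\right|\le\|\na\psi\|_{L^2(K)}\|\na f\|_{L^2(K)}$, so the whole lemma reduces to computing $\|\na\psi\|_{L^2(K)}$ sharply.

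For that final computation I would reuse the explicit moment formula already established inside the proof of Lemma \ref{Lemma:caculation}, namely $\int_K(x_i-M_i)(x_j-M_j)\dx=\frac{|K|}{(n+1)^2(n+2)}\sum_{p<q}(x_{ip}-x_{iq})(x_{jp}-x_{jq})$. Taking $i=j$ and summing over $i$ gives
\begin{equation*}
\|\na\psi\|_{L^2(K)}^2=\frac{1}{n^2}\int_K\left|x-{\rm mid}(K)\right|^2\dx=\frac{|K|}{n^2(n+1)^2(n+2)}\sum_{p<q}|a_p-a_q|^2.
\end{equation*}
Bounding each $|a_p-a_q|\le h_K$ over the $\binom{n+1}{2}=\frac{n(n+1)}{2}$ vertex pairs gives $\sum_{p<q}|a_p-a_q|^2\le\frac{n(n+1)}{2}h_K^2$, whence $\|\na\psi\|_{L^2(K)}\le\frac{h_K|K|^{1/2}}{\sqrt{2n(n+1)(n+2)}}$, which is precisely the asserted constant after dividing through by $|K|$.

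I do not anticipate a serious obstacle, since the argument is essentially dual to Lemma \ref{Lemma:caculation}. The one point requiring genuine care is verifying that $\na\psi\cdot\nu_E$ is truly constant on each face so that the boundary sum vanishes — this is exactly where the vanishing-mean hypothesis on $f$ enters — and making sure the vertex-distance estimate $\sum_{p<q}|a_p-a_q|^2\le\frac{n(n+1)}{2}h_K^2$ is the right way to bring in $h_K$ without degrading the sharp dimensional constant $\sqrt{2n(n+1)(n+2)}$.
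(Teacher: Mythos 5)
Your proposal is correct and is essentially the paper's own argument in a thin disguise: your $\psi$ satisfies $\nabla\psi=-\tfrac{1}{n}(x-{\rm mid}(K))$, so your Green's identity is exactly the paper's trace identity $\int_K\nabla f\cdot(x-M)\dx=\int_{\partial K}f\,(x-M)\cdot\nu\ds-n\int_Kf\dx$, followed by the same constancy of $(x-M)\cdot\nu_E$ on each face, the same Cauchy--Schwarz step, and the same moment computation $\|x-M\|_{L^2(K)}^2=\frac{|K|}{(n+1)^2(n+2)}\sum_{p<q}|a_p-a_q|^2\le\frac{n|K|}{2(n+1)(n+2)}h_K^2$. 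All constants check out.
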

\begin{proof}
Let the centroid of $K$ be $M:={\rm mid}(K)$ and vertices $a_p,1\leq p\leq n+1$. The proof follows the trace identity,
\begin{equation}\label{eqtrace}
  \int_K\nabla f\cdot(x-M)dx=\int_{\partial K}f(x-M)\cdot\nu d s-\int_Kf\div(x-M) d x.
\end{equation}
Herein we use the fact that $(x-M)\cdot\nu$ is constant on any $n-1$ dimensional subsimplex $E\subset\partial{K}$ and $
\int_Efd s=0
$.
This yields that
\begin{equation}\label{eq6:1}
\begin{split}
\left|\int_Kfdx\right|&=\left|\frac{1}{n}\int_{K}(x-M)\cdot\nabla fd x\right|\\
&\leq\frac{1}{n}\|x-M\|_{L^2(K)}\|\nabla f\|_{L^2(K)}.
\end{split}
\end{equation}
A similar calculation as in Lemma \ref{Lemma:caculation} shows that
\begin{equation*}
 \|x-M\|^2_{L^2(K)}=\frac{|K|}{(n+1)^2(n+2)}\sum_{p<q}|a_p-a_q|^2\leq\frac{n|K|}{2(n+1)(n+2)}h^2_K.
\end{equation*}
Substituting the above result into \eqref{eq6:1} completes the proof.
\end{proof}
\begin{lemma}
For any $v\in H^1(K)$, it holds that
\begin{equation}\label{Poincare-Friedrichs Inequality}
\|v-\Pi_{\rm CR}v\|_{L^2(K)}\leq\kappa h_K\|\nabla(v-\Pi_{\rm CR}v)\|_{L^2(K)},
\end{equation}
where
\begin{equation}
\kappa=\sqrt{\pi^{-2}+\frac{1}{2n(n+1)(n+2)}}.
\end{equation}
\end{lemma}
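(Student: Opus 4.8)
The plan is to set $w := v - \Pi_{\rm CR}v$ and to exploit the defining property of $\Pi_{\rm CR}$: for every $n-1$ dimensional subsimplex $E \subset \partial K$ one has $\int_E w \ds = \int_E(v - \Pi_{\rm CR}v)\ds = 0$, so $w$ has vanishing mean on each face of $K$. Since $\Pi_{\rm CR}v \in P_1(K)$, its gradient is constant and $\nabla w = \nabla(v - \Pi_{\rm CR}v)$, so the claim \eqref{Poincare-Friedrichs Inequality} is exactly $\|w\|_{L^2(K)} \leq \kappa h_K \|\nabla w\|_{L^2(K)}$. I would prove this by splitting $w$ into its integral mean over $K$ and the mean-free remainder, and bounding the two pieces by two different Poincar\'e-type inequalities whose constants add under the squared $L^2$ norm.

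Concretely, write $\bar{w} := \frac{1}{|K|}\int_K w \dx$. Since $w - \bar{w}$ has vanishing mean over $K$, it is $L^2(K)$-orthogonal to the constant $\bar{w}$, and the Pythagorean identity gives
\begin{equation*}
\|w\|^2_{L^2(K)} = \|w - \bar{w}\|^2_{L^2(K)} + |K|\,\bar{w}^2.
\end{equation*}
For the mean-free term I invoke the Poincar\'e inequality of Lemma \ref{Chavelpoincare} applied to $w$, which yields $\|w - \bar{w}\|^2_{L^2(K)} \leq \pi^{-2} h_K^2 \|\nabla w\|^2_{L^2(K)}$. For the mean term, the crucial point is that $w$ has vanishing face averages, so the preceding lemma, that is estimate \eqref{integralaverage}, applies directly to $f = w$ and gives
\begin{equation*}
|\bar{w}| \leq \frac{h_K}{\sqrt{2n(n+1)(n+2)}\,|K|^{1/2}}\,\|\nabla w\|_{L^2(K)}.
\end{equation*}
Squaring this and multiplying by $|K|$ cancels the volume factor and produces $|K|\,\bar{w}^2 \leq \frac{h_K^2}{2n(n+1)(n+2)}\|\nabla w\|^2_{L^2(K)}$.

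Adding the two contributions yields precisely
\begin{equation*}
\|w\|^2_{L^2(K)} \leq \Big(\pi^{-2} + \tfrac{1}{2n(n+1)(n+2)}\Big) h_K^2\, \|\nabla w\|^2_{L^2(K)} = \kappa^2 h_K^2 \|\nabla w\|^2_{L^2(K)},
\end{equation*}
and taking square roots finishes the argument. I do not expect a genuine obstacle here: the whole proof rests on the single observation that the orthogonal mean/mean-free decomposition turns the target constant into a sum of two squares, one summand supplied by the generic constant $\pi^{-1}$ of Lemma \ref{Chavelpoincare} and the other by the dimension-dependent constant of \eqref{integralaverage}. The only step demanding care is checking the hypothesis of \eqref{integralaverage}, namely that $w$ truly has zero mean on every face $E \subset \partial K$, which is immediate from the definition of $\Pi_{\rm CR}$.
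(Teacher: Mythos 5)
Your proposal is correct and follows essentially the same route as the paper: the orthogonal decomposition of $v-\Pi_{\rm CR}v$ into its mean over $K$ and the mean-free remainder, with Lemma \ref{Chavelpoincare} bounding the latter and the face-average estimate \eqref{integralaverage} bounding the former, so the two squared constants add. Your write-up merely makes explicit the steps the paper leaves terse.
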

\begin{proof}

Let $f=v-\Pi_{\rm CR}v$. The function $f$ satisfies, for any $n-1$ dimensional subsimplex $E\subset\partial K$,
\begin{equation*}
\int_Efds=0.
\end{equation*}
Let $f_K=\frac{1}{|K|}\int_Kfdx$ denote the integral mean on $K$, which leads to
\begin{equation*}
\|f\|^2_{L^2(K)}=\|f-f_K\|^2_{L^2(K)}+|K|f^2_K.
\end{equation*}
Lemma \ref{Chavelpoincare} plus \eqref{integralaverage} reveal
\begin{equation*}
\|f\|^2_{L^2(K)}\leq(\pi^{-2}+\frac{1}{2n(n+1)(n+2)})h^2_K\|\nabla f\|_{L^2(K)}^2,
\end{equation*}
which completes the proof.
\end{proof}
Hence we can choose
\begin{equation}\label{estimateofbeta1}
 \beta_1=\begin{cases}
 \sqrt{j_{1,1}^{-2}+\frac{1}{48}}\approx0.2984&n=2,\\
 \sqrt{\pi^{-2}+\frac{1}{2n(n+1)(n+2)}}&n\geq 3.
 \end{cases}
\end{equation}
Next, we analyze the computable constant $\beta_2$. To this end, we define
\begin{equation}\label{rate}
\xi=\max_{K,K'\in\mathcal{T}}\frac{|K'|}{|K|},
\end{equation}
and
\begin{equation}\label{elementnumber}
N=\max_{z\in\mathcal{V}}|\omega_z|,
\end{equation}
where $\mathcal{V}$ denotes the set of all the vertices of $\mathcal{T}$ and $|\omega_z|$ denotes the number of elements containing vertex $z$.
\begin{lemma}
For any $w\in V_{\rm CR}$, it holds that
\begin{equation*}
\|w-\Pi_{\rm c}w\|\leq\frac{(n-1)N\sqrt{\xi}}{n}h\|\nabla_{\rm NC}w\|.
\end{equation*}
\end{lemma}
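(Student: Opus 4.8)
The plan is to estimate the error element by element. Since both $w$ and $\Pi_{\rm c}w$ are piecewise affine, the difference $w-\Pi_{\rm c}w$ restricted to any $K\in\mathcal{T}$ is a $P_1$ function, hence determined by its $n+1$ nodal values. The first step is the mass-matrix bound for an affine function in terms of its vertex values: since the largest eigenvalue of the $P_1$ mass matrix on $K$ equals $|K|/(n+1)$, one has
\[
\|w-\Pi_{\rm c}w\|_{L^2(K)}^2\leq\frac{|K|}{n+1}\sum_{z}\big|(w-\Pi_{\rm c}w)|_K(z)\big|^2,
\]
the sum running over the vertices $z$ of $K$. It then suffices to control each nodal value.

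Next I would rewrite the nodal values through jumps. For an interior vertex $z$, the definition \eqref{interpolationC} gives
\[
(w-\Pi_{\rm c}w)|_K(z)=\frac{1}{|\omega_z|}\sum_{K'\in\omega_z}\big(w|_K(z)-w|_{K'}(z)\big),
\]
while for a boundary vertex $(\Pi_{\rm c}w)(z)=0$, so the nodal value is simply $w|_K(z)$. In both cases the quantity is expressed through the jumps $[w]$ across the $n-1$ dimensional subsimplices surrounding $z$: the patch $\omega_z$ is connected through faces containing $z$, so each difference $w|_K(z)-w|_{K'}(z)$ telescopes into a sum of single-face jumps $[w](z)$, and for a boundary vertex the trace of $w$ on a boundary face plays the same role, using $\int_E w\,ds=0$.

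The geometric heart of the argument is a pointwise bound for such a jump at a vertex. If $g:=[w]|_E$ is affine on an $n-1$ dimensional subsimplex $E$ with $\int_E g\,ds=0$, then $g$ vanishes at the centroid $c_E$ of $E$, whence
\[
|g(z)|=|g(z)-g(c_E)|=\big|\nabla g\cdot(z-c_E)\big|\leq\frac{n-1}{n}\,h_K\big(|\nabla w|_{K^+}|+|\nabla w|_{K^-}|\big),
\]
because $z-c_E=\tfrac1n\sum_{i\neq k}(z-b_i)$ is a sum of $n-1$ edge vectors of $E$, each of length at most $h_K$, and the tangential gradient of $g$ is bounded by the two element gradients. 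This produces the factor $(n-1)/n$.

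Finally I would assemble the pieces. Substituting the jump bound, each nodal value is controlled by a sum of at most $\sim N$ element gradients; a Cauchy--Schwarz inequality on this finite sum turns the average into a sum of squares, and the elementwise identity $|K|\,|\nabla w|_{K'}|^2\leq\xi\,|K'|\,|\nabla w|_{K'}|^2=\xi\,\|\nabla_{\rm NC}w\|_{L^2(K')}^2$ converts the neighbouring constant gradients into genuine $L^2$-contributions, at the cost of $\sqrt{\xi}$. Summing over $K\in\mathcal{T}$ and noting that each element gradient is charged at most $N$ times collects the global constant. The main obstacle is exactly this combinatorial accounting: one must check that, after the telescoping and the Cauchy--Schwarz step, the patch cardinality $N$ and the measure ratio $\xi$ enter only through the advertised power $(n-1)N\sqrt{\xi}/n$ rather than a larger product, which requires tracking how often each face jump and each element gradient is counted across the overlapping vertex patches.
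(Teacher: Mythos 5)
Your proposal is correct and follows essentially the same route as the paper: the $P_1$ mass-matrix bound $\|w-\Pi_{\rm c}w\|_{L^2(K)}^2\leq\frac{|K|}{n+1}\sum_p|(w-\Pi_{\rm c}w)(a_p)|^2$, the reduction of nodal errors to face jumps of the mean-zero affine function $[w]$ with the vertex-to-centroid distance $\frac{n-1}{n}h_{E}$ producing the factor $(n-1)/n$, and the conversion of neighbouring constant gradients into $L^2$ norms at the cost of $\sqrt{\xi}$ and $N$. The combinatorial accounting you flag as the remaining obstacle is carried out in the paper exactly as you anticipate, via the bound $\sup_{K'\cap a_p\neq\varnothing}|w|_K(a_p)-w|_{K'}(a_p)|^2\leq\frac{N}{4}\sum_{E'\ni a_p}|[w]|^2_{L^\infty(E')}$ and a trace inequality for the piecewise constant $\nabla_{\rm NC}w$.
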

\begin{proof}
Given element $K\in\mathcal{T}$, let $a_p,1\leq p\leq n+1$ be its vertices and $\theta_p$ be the corresponding barycentric coordinates.
 Then,
 $$w|_K=\sum^{n+1}_{p=1} w|_K(a_p)\theta_p\text{ and }
(\Pi_{\rm c}w)|_K=\sum^{n+1}_{p=1}\bar{w}_p\theta_p,$$ where
 \begin{equation*}
\bar{w}_p=\frac{1}{|\omega_{a_p}|}\sum_{K'\in\omega_{a_p}}w|_{K'}({a_p}),
 \end{equation*}
 as defined in \eqref{interpolationC}. This gives
\begin{equation*}
  \begin{split}
  \|w-\Pi_{\rm c}w\|^2&=\sum_{K\in\mathcal{T}}\|w-\Pi_{\rm c}w\|^2_{L^2(K)}\\
  &=\sum_{K\in\mathcal{T}}\|\sum^{n+1}_{p=1} w|_K(a_p)\theta_p-\sum^{n+1}_{p=1}\bar{w}_p\theta_p\|^2_{L^2(K)}\\
  &\leq\sum_{K\in\mathcal{T}}\sum^{n+1}_{p,q=1}\left|(w|_K(a_p)-\bar{w}_p)(w|_K(a_q)-\bar{w}_q)\right|(\theta_p,\theta_q)_{L^2(K)}.
  \end{split}
  \end{equation*}
  An explicit calculation that $(\theta_p,\theta_q)_{L^2(K)}=\frac{|K|}{(n+1)(n+2)}(1+\delta_{pq})$ leads to
   \begin{equation*}
 \|w-\Pi_{\rm c}w\|^2
  \leq\sum_{K\in\mathcal{T}}\frac{|K|}{n+1}\sum_{p=1}^{n+1}|w|_K(a_p)-\bar{w}_p|^2.
  \end{equation*}
It follows from the definitions of the interpolation operator $\Pi_{\rm c}$ in \eqref{interpolationC} and $N$ in \eqref{elementnumber} that
\begin{equation}\label{sec6eq1}
\begin{split}
  \|w-\Pi_{\rm c}w\|^2&\leq \sum_K\frac{|K|}{n+1}\sum_{p=1}^{n+1}\sup_{K'\cap{a_p}\neq\varnothing}|w|_K(a_p)-w|_{K'}(a_p)|^2\\
  &\leq \sum_{K\in\mathcal{T}}\frac{|K|}{n+1}\sum_{p=1}^{n+1}\frac{N}{4}\sum_{E'\in\mathcal{E},E'\cap{a_p}\neq\varnothing}|[w]|^2_{L^\infty(E')}\\
  &= \sum_{K\in\mathcal{T}}\frac{N|K|}{4(n+1)}\sum_{p=1}^{n+1}\sum_{E'\in\mathcal{E},E'\cap{a_p}\neq\varnothing}|[w]|^2_{L^\infty(E')}.
  \end{split}
  \end{equation}
Given $E'\in\mathcal{E}$, suppose that $|[w]|$ achieves the maximum at point $z'$ and the centroid of $E'$ is $M'$. Let $\tau_{E'}$ denote the tangent vector of $E'$ from $M'$ to $z'$. Since $\int_{E'}[w]ds=0$ and $[w]\in P_1(E')$, this yields that
\begin{equation}\label{sec6eq2}
\begin{split}
|[w](z')|=&\left|\int_M^z[\frac{\partial w}{\partial\tau_{E'}}]ds\right|\leq|z'-M'|\|[\nabla w]\|_{L^\infty(E')}\\
&\leq\frac{n-1}{n}h_{E'}\|[\nabla w]\|_{L^\infty(E')}=\frac{(n-1)h_{E'}}{n|E'|^{1/2}}\|[\nabla w]\|_{L^2(E')}.
\end{split}
\end{equation}
Substituting \eqref{sec6eq2} into \eqref{sec6eq1} gives that
  \begin{equation*}
  \begin{split}
 \|w-\Pi_{\rm c}w\|^2&\leq \sum_K\frac{(n-1)^2N|K|}{4n^2(n+1)}\sum^{n+1}_{p=1}\sum_{E'\in\mathcal{E},E'\cap{a_p}\neq\varnothing}h_{E'}^2 \|[\nabla w]\|_{L^2(E')}^2.
   \end{split}
\end{equation*}
Since $\nabla_{\rm NC}w$ is a piecewise constant, the trace inequality holds
\begin{equation*}
  \|[\nabla w]\|_{L^2(E')}^2\leq\frac{2|E'|}{|K_1|}\|\nabla w\|_{L^2(K_1)}^2+\frac{2|E'|}{|K_2|}\|\nabla w\|_{L^2(K_2)}^2.
\end{equation*}
Hence
  \begin{equation*}
  \begin{split}
 \|w-\Pi_{\rm c}w\|^2&\leq \sum_{K\in\mathcal{T}}\frac{N(n-1)^2|K|}{n^2(n+1)}\sum^{n+1}_{p=1}\sum_{K'\cap a_p\neq\varnothing}\frac{h^2_{E'}}{|K'|}\|\nabla w\|_{L^2(K')}^2.
  \end{split}
\end{equation*}
By the definition of $\xi$ in \eqref{rate}, there holds that
\begin{equation*}
\|w-\Pi_{\rm c}w\|^2\leq\frac{(n-1)^2N^2\xi}{n^2}h^2\sum_{K\in\mathcal{T}}\|\nabla w\|_{L^2(K)}^2.
\end{equation*}
This completes the proof.
\end{proof}
\section{Numerical Results}
\subsection{The Laplace operator}
In this example, the L-shape domain $\Omega=(0,1)^2/[0.5,1]^2$ and $A(x)\equiv1$. We compare the lower bounds provided by the CR and GCR elements.
Let $\lam_{\ell,\rm CR}$ be the $\ell$-th eigenvalues by the CR element. Carstensen et al. \cite{CarstensenGedickel2013} give the guaranteed lower bounds
\begin{equation}\label{CRlowerbound}
GLB_{\ell, \rm CR}=\frac{\lam_{\ell,\rm CR}}{1+0.1931\lam_{\ell,\rm CR}h^2}.
\end{equation}
By the GCR element, Theorem \ref{guranteelowerbound} gives the guaranteed lower bounds
\begin{equation}\label{GCRlowerbound}
 GLB_{\ell, \rm GCR}=\frac{\lambda_{\ell,\rm GCR}}{1+\frac{\lambda_{\ell,\rm GCR}^2h^4}{58.7276( 14.6819+\lambda_{\ell, \rm GCR}h^2)}}.
\end{equation}
Note that the lower bounds in \eqref{GCRlowerbound} have higher order accuracy than those in \eqref{CRlowerbound}. Table \ref{firsteigenvalue} and Table
\ref{20theigenvalue} show the results of first and 20th eigenvalues, respectively. For comparison, the discrete eigenvalues $\lam_{\ell,\rm P1}$ by the conforming P1 element are computed as upper bounds. Due to the fact that $V_{\rm CR}\subset V_{\rm GCR}$, $\lam_{\ell, \rm GCR}$ is smaller than $\lam_{\ell,\rm CR}$.  However, the guaranteed lower bounds produced by the GCR element are larger than those by the CR element.

\begin{table}[h!]
\centering
\caption{The first eigenvalue of L-shape domain}
\label{firsteigenvalue}
% Table generated by Excel2LaTeX from sheet 'Sheet2'
\begin{tabular}{|c|c|c|c|c|c|}
\hline
        $h$ & $\lam_{1,\rm CR}$ &     $GLB_{1, \rm CR}$ & $\lam_{1,\rm GCR}$ &     $ GLB_{1, \rm GCR}$ &$ \lam_{1,\rm P1}$ \\
\hline
  0.707107 &         24 &   11.6092 &    21.4979 &   19.9542 &            \\
\hline
  0.353553 &    32.7371 &   24.0013 &    31.1326 &    30.7063 &     56.3170 \\
\hline
  0.176777 &    36.5336 &   33.1658 &    35.9771 &   35.9282 &    43.0976 \\
\hline
  0.088388 &    37.8448 &   36.8751 &     37.6910 &   37.6873 &    39.8639 \\
\hline
  0.044194 &   38.2993 &   38.0462 &    38.2596 &   38.2594 &    38.9633 \\
\hline
  0.022097 &   38.4619 &    38.3978 &    38.4519 &   38.4519 &    38.6918 \\
\hline
  0.011049 &    38.5219 &   38.5058 &    38.5194 &   38.5194 &    38.6048 \\
\hline
  0.005524 &    38.5446 &   38.5406 &     38.5440     &   38.5440       &  38.5754
      \\
\hline
\end{tabular}
\end{table}
% Table generated by Excel2LaTeX from sheet 'Sheet2'
\begin{table}[h!]
\centering
\caption{The 20th eigenvalue of L-shape domain}
\label{20theigenvalue}
\begin{tabular}{|c|c|c|c|c|c|}
\hline
          $h$ & $\lam_{20,\rm CR}$ &    $GLB_{20, \rm CR}$& $\lam_{20,\rm GCR}$ &      $GLB_{20, \rm GCR}$&$ \lam_{20,\rm P1}$  \\
\hline
  0.353553 &   454.2769 &   75.0788 &    298.6560 &    205.0860 &            \\
\hline
  0.176777 &   307.4914 &   165.7926 &   280.6304 &   265.7885 &   722.3323 \\
\hline
  0.088388 &   387.1673 &   305.0883 &   372.4979 &   369.4693 &   500.4567 \\
\hline
  0.044194 &   401.4816 &   375.3058 &   397.2255 &   396.9623 &   429.3377 \\
\hline
  0.022097 &   405.0899 &   398.0864 &   403.9846 &   403.9666 &   412.1292 \\
\hline
  0.011049 &   406.0462 &    404.2640 &   405.7671 &   405.7659 &   407.8798 \\
\hline
  0.005524 &   406.3103 &   405.8627 &   406.2404 &   406.2403 &   406.8021 \\
\hline
\end{tabular}
\end{table}
\subsection{General second elliptic operators}
In this example, let $\Omega=(0,1)^2$, and
\begin{equation*}
A(x)=\begin{pmatrix}
x_1^2+1 &x_1x_2\\
x_1x_2 & x_2^2+1
\end{pmatrix}.
\end{equation*}
By a direct computation, the eigenvalues of $A(x)$ are $x_1^2+x_2^2+1$ and $1$, and $|A-\bar{A}|_\infty\leq\min\{\frac{4}{3}h,1\}$.
The constants in \eqref{constant1}--\eqref{constant4} are
\begin{equation*}
  C_A=1,C_{\bar{A}}=1, C_{\bar{A},A}=\min\{\sqrt{1+\frac{8}{3}h},\sqrt{3}\},C_\infty=\min\{\frac{8}{3},\frac{2}{h}\}.
\end{equation*}
\begin{equation*}
  \eta_1=C_{\bar{A}}C_{\bar{A},A}=\min\{\sqrt{1+\frac{8}{3}h},\sqrt{3}\},
\end{equation*}
\begin{equation*}
  \eta_2=C_\infty C_{\bar{A}}C_AC_{\bar{A},A}=\min\{\frac{8}{3},\frac{2}{h}\}\min\{\sqrt{1+\frac{8}{3}h},\sqrt{3}\}.
\end{equation*}
To compute the guaranteed lower and upper bounds for the first eigenvalue, it doesn't need the meshsize condition in \eqref{hconstrain} and \eqref{hconstrain2}. As for the 20th eigenvalue, we compute $\lam^m_{20,c}$ as a upper bound of $\lam_{20}$. Then \eqref{hconstrain} reads as follows
\begin{equation*}
  h<\frac{j_{1,1}}{\eta_1\sqrt{\lam^m_{20,c}}}:=h_1.
\end{equation*}
Since the computations are on uniform partitions, the constants in \eqref{rate} and \eqref{elementnumber} are
\begin{equation*}
  \xi=1,N=6,\beta_2=\frac{N\sqrt{\xi}}{2}=3.
\end{equation*}
We use the estimate of $\beta_1$ in \eqref{estimateofbeta1}. Let $\beta_1\approx0.2984$.
The condition in \eqref{hconstrain2} reads
\begin{equation*}
  h<\frac{1}{(\beta_1+\beta_2)C_A\sqrt{\lam_{20,\rm GCR}}}=\frac{1}{(0.2984+3)\sqrt{\lam_{20,\rm GCR}}}:=h_2.
\end{equation*}
Let $\beta=1/2$ in Theorem \ref{guranteelowerbound}. The GCR element gives the guaranteed lower bounds
\begin{equation}
 GLB_{\ell, \rm GCR}=\frac{\lambda_{\ell,\rm GCR}}{1+\frac{\lambda_{\ell, \rm GCR}^2C_A^4h^4}{58.7276( 7.3410+\lambda_{\ell, \rm GCR}C_A^2h^2)}+2\eta_2^2h^2}.
\end{equation}

Table \ref{firsteigenvalue2} and Table
\ref{20theigenvalue2} show the results of the first and 20th eigenvalues, respectively. From Table \ref{20theigenvalue2}, we find that when $h\leq0.0110$, the conditions $h<h_1$ and $h<h_2$ are guaranteed. Actually, when $h\leq0.1768$, $\Pi_{\rm c}(\Pi_{\rm CR}E_{20,\rm GCR})$ is already $20$-dimensional and $\lam^m_{20,\rm c}$ is thus a guaranteed upper bound of $\lam_{20}$.
% Table generated by Excel2LaTeX from sheet 'Sheet1'
\begin{table}[h!]
\centering
\caption{The first eigenvalue of square domain}
\label{firsteigenvalue2}
% Table generated by Excel2LaTeX from sheet 'Sheet3'
\begin{tabular}{|r|r|r|r|r|}
\hline
   $h$       &    $\lam_{1,\rm GCR}$ &      $GLB_{1, \rm GCR}$ &         $\lam_{1,\rm P1}$ &     $\lam_{1,c}$ \\
\hline
  1.4142 &  22.93710 &   0.89342  &            &            \\
\hline
  0.7071&  22.73488 &   1.05071  &         39 &         39 \\
\hline
  0.3536 &  25.38568  &  5.67888 &  30.22432  &  30.68603  \\
\hline
  0.1768 &  26.29812  &  15.88658 &  27.52878  &  27.63606  \\
\hline
  0.0884 &  26.54494  &  23.33831 &  26.85419  &  26.86946  \\
\hline
  0.0442 &  26.60805  &  25.80656  &  26.68551  &  26.68745  \\
\hline
  0.0221 &  26.62394  &  26.42958  &  26.64332  &  26.64356  \\
\hline
  0.0110 &  26.62792  &  26.58041  &  26.63277  &  26.63280  \\
\hline
  0.0055 &  26.62892  &  26.61719  &  26.63013  &  26.63013  \\
\hline
\end{tabular}

\end{table}
\begin{table}
\caption{The 20th eigenvalue of square domain}
\label{20theigenvalue2}
% Table generated by Excel2LaTeX from sheet 'Sheet1'
% Table generated by Excel2LaTeX from sheet 'Sheet3'
\begin{tabular}{|r|r|r|r|r|r|r|r|}
\hline
   $h$ &         $h_1$   &       $h_2$     &       $\lam_{20,\rm GCR}$      &   $GLB_{20, \rm GCR}$     &    $\lam_{20,\rm P1}$         &    $\lam_{20,\rm c}$    &   $\lam^m_{20,\rm c}$          \\
\hline
0.3536   &             & 0.0197   &  236.8297 & 48.7524
 &&348.5134&\\
\hline
   0.1768  &    0.0874  &    0.0173  &  305.4755  &   174.9729
&  576.1674  &  620.3720
   & 720.0317 \\
\hline
   0.0884  &   0.1127   &    0.0159  &  362.8685  & 315.3326 &  427.1357  & 424.3606
  &  433.1020\\
\hline
   0.0442  & 0.1181   &    0.0156  &  378.9545  & 367.1308
  &  394.1451  & 394.3686
 &  394.7023 \\
\hline
   0.0221  &    0.1193  &    0.0155  &  383.2543  &  380.4266  &  387.0340  & 387.0722   &  387.0910  \\
\hline
   0.0110  &    0.1195  &    0.0155  &  384.3485  &  383.6609  &  385.2930  &  385.2979 &   385.2991  \\
\hline
   0.0055  &    0.1196  &    0.0155  &  384.6233  &  384.4539  &  384.8595  &  384.8601 &384.8601     \\
\hline
\end{tabular}
\end{table}

\end{document}